\documentclass[12pt,a4paper]{article}
\usepackage{amsmath,amsthm,amssymb, mathrsfs}
\usepackage{hyperref}
\usepackage{cite}
\input xy
\xyoption{all}
\usepackage{epsfig}

\oddsidemargin 0.1875 in \evensidemargin 0.1875in
\textwidth 6 in 
\textheight 230mm \voffset=-4mm 

\newtheorem{thm}{Theorem}[section]

\newtheorem{lem}[thm]{Lemma}

 \theoremstyle{definition}

\theoremstyle{remark}
\newtheorem{remark}[thm]{Remark}

\numberwithin{equation}{section}



\newcommand{\ben}{\begin{enumerate}}

\newcommand{\een}{\end{enumerate}}
\newcommand{\bit}{\begin{itemize}}
\newcommand{\eit}{\end{itemize}}






\begin{document}

\title
{Revisiting the Problem of Recovering Functions in $\Bbb R^{n}$ by Integration on $k$ Dimensional Planes}
\date{}

\vskip 1cm
\author{Yehonatan Salman \\ Email: salman.yehonatan@gmail.com\\ Weizmann Institute of Science}
\date{}

\maketitle

\begin{abstract}

The aim of this paper is to present inversion methods for the classical Radon transform which is defined on a family of $k$ dimensional planes in $\Bbb R^{n}$ where $1\leq k\leq n - 2$. For these values of $k$ the dimension of the set $\mathcal{H}(n,k)$, of all $k$ dimensional planes in $\Bbb R^{n}$, is greater than $n$ and thus in order to obtain a well-posed problem one should choose proper subsets of $\mathcal{H}(n,k)$. We present inversion methods for some prescribed subsets of $\mathcal{H}(n,k)$ which are of dimension $n$.

\end{abstract}

\section{Introduction and Motivation}

\subsection{Basic mathematical background}

\hskip0.6cm For an integer $n\geq2$ denote by $\Bbb R^{n}$ the $n$ dimensional Euclidean space. For $p\in\Bbb R^{n}$ and $r\geq0$ denote by
\vskip-0.2cm
$$\hskip-8.15cm \mathrm{S}(p, r) = \{x\in\Bbb R^{n}:|x - p| = r\}$$
the hypersphere in $\Bbb R^{n}$ with the center at $p$ and radius $r$. Let $\Bbb S^{n - 1} = \mathrm{S}(\overline{0}, 1)$ be the unit sphere in $\Bbb R^{n}$ and for every $\psi\in\Bbb S^{n - 1}$ denote by $\Bbb S_{\psi}^{n - 2}$ the great $n - 2$ dimensional subsphere of $\Bbb S^{n - 1}$ which is orthogonal to $\psi$:
\vskip-0.2cm
$$\hskip-8.3cm \Bbb S_{\psi}^{n - 2} = \{x\in\Bbb S^{n - 1}:\langle x, \psi\rangle = 0\}$$
where $\langle, \rangle$ denotes the usual inner product in $\Bbb R^{n}$. For an integer $k$ satisfying $1\leq k\leq n - 1$ denote by $\mathcal{H}(n,k)$ and by $\mathcal{H}^{\ast}(n,k)$ respectively the families of all $k$ dimensional planes ($k$ planes for short) and $k$ dimensional subspaces (i.e., $k$ planes passing through the origin) in $\Bbb R^{n}$.

For a continuous function $f$, defined in $\Bbb R^{n}$, define the $k$ dimensional Radon transform $\mathcal{R}_{k}f$ of $f$ by
\vskip-0.2cm
$$\hskip-10.4cm \mathcal{R}_{k}f:\mathcal{H}(n,k)\rightarrow\Bbb R,$$
$$\hskip-7.9cm (\mathcal{R}_{k}f)(\Sigma) = \int_{\Sigma}fdm_{\Sigma}, \Sigma\in\mathcal{H}(n,k)$$
where $dm_{\Sigma}$ denotes the standard infinitesimal area measure on the $k$ plane $\Sigma$.

\subsection{The main problem and known results}

\hskip0.6cm The aim of this paper is to find inversion methods for the $k$ dimensional Radon transform $\mathcal{R}_{k}$ in case where $n\geq3$ and $1\leq k\leq n - 2$. That is, we would like to express a function $f$ in question via its $k$ dimensional Radon transform $\mathcal{R}_{k}f$.

In case where $k = n - 1$ the $k$ dimensional Radon transform $\mathcal{R}_{k} = \mathcal{R}_{n - 1}$ coincides with the classical Radon transform $\mathcal{R}$ which integrates functions on the set of all hyperplanes in $\Bbb R^{n}$. There is an extensive literature regarding the classical Radon transform where the first results can be tracked down to 1917 when J. Radon showed in \cite{12} that any differentiable function $f$ in $\Bbb R^{3}$ can be recovered from its Radon transform $\mathcal{R}f$ (i.e., where our data consists of the integrals of $f$ on hyperplanes in $\Bbb R^{3}$). Further results concerning the problem of recovering a function $f$ from $\mathcal{R}f$ in higher dimensions as well as finding uniqueness and range conditions for $\mathcal{R}$ have been obtained by many authors and we do not intend to give an exhaustive list of references. Some classical results can be found in \cite{2, 4, 5, 6, 7, 8, 9, 10, 11, 12, 13, 14, 15}.

For the case where $1\leq k\leq n - 2$ and where $\mathcal{R}_{k}$ is defined on the whole family $\mathcal{H}(n,k)$ of $k$ planes in $\Bbb R^{n}$ then obviously the reconstruction problem becomes trivial since one can use known inversion methods for the case where integration is taken on hyperplanes in $\Bbb R^{n}$ and the fact that each such hyperplane is a disjoint union of $k$ planes. However, if $1\leq k\leq n - 2$ then $\dim\mathcal{H}(n,k) = (k + 1)(n - k) > n$ and the inversion problem becomes over determined. Thus, in order to obtain a well posed problem one should restrict the domain of $\mathcal{R}_{k}f$ to a subset $\mathcal{H}'\subset\mathcal{H}(n,k)$ for which $\dim\mathcal{H}' = n$.

An extensive work and results concerning the problem of determining a function $f$ from $\mathcal{R}_{k}f$ in case where the later transform is restricted to various subsets $\mathcal{H}'\subset\mathcal{H}(n,k)$ of dimension $n$ have been obtained in \cite{10, 11}. In \cite[Chap. 4.2]{11} a reconstruction formula was found in case where $\mathcal{R}_{1}f$ is restricted to a subset $\mathcal{H}'\subset\mathcal{H}(n,1)$ of lines which meet a curve $\mathrm{C}\subset\textbf{P}(\Bbb R^{n})$ (where $\textbf{P}(\Bbb R^{n})$ is the projective space of all lines in $\Bbb R^{n}$ which pass through the origin) at infinity. The dimension of this subset coincides with the dimension of $n$ in case where $n = 3$.

In \cite[Chap. 2.6]{10} and \cite[Chap. 4.3]{11} a reconstruction formula was obtained for $\mathcal{R}_{1}f$ in case where $\mathcal{H}'\subset\mathcal{H}(3, 1)$ is the set of all lines which intersect a curve $\Gamma\subset\Bbb R^{3}, \Gamma\cap\textrm{supp}(f) = \emptyset$ which satisfies the completeness condition: each hyperplane which passes through $\textrm{supp}(f)$ intersects $\Gamma$ transversely at a unique point. In \cite[Chap. 4.4]{11} the more general case, where lines are tangent to a given surface in $\Bbb R^{3}$, was considered (every curve $\Gamma = \Gamma (s)$ can be approximated by the family of surfaces $\mathrm{\Lambda}_{\epsilon}(s, v) = \Gamma(s) + \epsilon v, v\in \Bbb S^{1}_{\Gamma'(s)/ |\Gamma'(s)|}$ as $\epsilon\rightarrow0^{+}$). For cases where integration is taken on planes of dimension $k > 1$ it is shown in \cite[Chap. 5.1]{11} how to obtain an inversion formula in case where $\mathcal{H}'\subset\mathcal{H}(n,k)$ consists of $k$ planes of the form $x + L$ where $L\in \mathbf{C}, x\in L^{\top}$ and $\mathbf{C}$ is a $k$ dimensional manifold in $\mathcal{H}^{\ast}(n,k)$.\\

\subsection{Main results}

\hskip0.6cm In this paper we present three inversion methods for three different prescribed subsets $\mathcal{H}'$ of $\mathcal{H}(n,k)$ where for each such subset the problem of recovering a function $f$, defined in $\Bbb R^{n}$, from its $k$ dimensional Radon transform $\mathcal{R}_{k}f$ is well posed (i.e., the dimension of the manifold $\mathcal{H}'$ of $k$ planes is equal to $n$).

The first inversion method is obtained for a special subset $\mathcal{H}'\subset\mathcal{H}(n,k)$ which is determined by fixing $k + 1$ points $x_{1},...,x_{k + 1}$ in $\Bbb R^{n}$ in general position. Each $k$ plane in $\mathcal{H}'$ is determined by taking a point $p$ in $\Bbb R^{n}$, which is in general position with the other fixed points, and then taking the intersection of the $k + 1$ plane passing through $p, x_{1},..., x_{k + 1}$ and the unique hyperplane which passes through $p$ and its normal is in the direction $\overrightarrow{op}$. We prove that our inversion problem is well posed and provide an inversion method for each function, whose $k$ dimensional Radon transform $\mathcal{R}_{k}f$ is defined on this subset $\mathcal{H}'$, and which satisfies a decaying condition at infinity. The proof of the reconstruction method is purely geometrical and does not use any analytical tools. We find a family of $k + 1$ planes such that the union of these planes is dense in $\Bbb R^{n}$ and such that for every $k + 1$ plane $\Sigma$ in this family the set of $k$ planes in $\mathcal{H}'$, which are contained in $\Sigma$, is in fact the set of all $k$ planes in $\Sigma$. Thus, we can use known inversion formulas for the classical Radon transform, on each such $k + 1$ plane, in order to reconstruct the function $f$ in question.

The second inversion method is obtained for the subset $\mathcal{H}'\subset\mathcal{H}(3,1)$ of all lines in $\Bbb R^{3}$ which are at equal distance from a fixed given point $p$. We show that our problem is well posed (i.e., in this case $\dim\mathcal{H}' = 3$) and obtain an inversion method using expansion into spherical harmonics and exploiting the fact that $\mathcal{H}'$ is invariant under rotations with respect to the point $p$. Observe that $\mathcal{H}'$ can be also described as the set of lines which are tangent to a given sphere with center at $p$.

The third inversion method is obtained for the subset $\mathcal{H}'\subset\mathcal{H}(3,1)$ of all lines in $\Bbb R^{3}$ which have equal distances from two given fixed points $x$ and $y$ in $\Bbb R^{3}$ ($x\neq y$). Again, we show that our problem is well posed and for the inversion method we exploit the fact that $\mathcal{H}'$ is invariant under rotations with respect to the line $l$ passing through the points $x$ and $y$. Using this invariance property of $\mathcal{H}'$ we take any hyperplane $\mathbf{H}$, which is orthogonal to $l$, and expand the restriction $f|_{\mathbf{H}}$, of the function $f$ in question, into Fourier series with respect to the angular variable in $\mathbf{H}$.\\

We would like again to emphasize that the problem of recovering functions from integration on $k$ planes, which belong to an $n$ dimensional manifold $\mathcal{H}'\subset\mathcal{H}(n,k)$, is not new and the manifolds of lines, corresponding to the second and third inversion problems described above, were already considered in \cite{10, 11}. However, the inversion methods presented in this paper are new and can provide new insights on developing other reconstruction methods for more complex subsets $\mathcal{H}'$ of $k$ planes in $\mathcal{H}(n,k)$.

\section{Exact Formulations and Proofs of the Main Results}

\hskip0.6cm In this section we formulate and prove the inversion methods for the problem of recovering a function $f$ from integration on any subset of $k$ planes from the three subsets $\mathcal{H}'\subset\mathcal{H}(n,k)$ which were described in the previous section. For the first subset $\mathcal{H}'$ of $k$ planes we do not give an explicit inversion formula but instead we just assert that every function can be reconstructed from integration on $k$ planes from $\mathcal{H}'$ whereas the inversion method is described in the proof itself. This is because we rely on the reduction to the classical Radon transform from which one can choose many known inversion formulas for this integral transform. The reconstruction methods for the other two subsets are given explicitly.

The description of the subsets $\mathcal{H}'$ of $k$ planes and the inversion methods, for the three subsets described above, are given in the following three subsections. Each subsection is self contained and the mathematical tools, the main results and their proofs for the reconstruction formulas, for each one of these three subsets, are formulated individually in each subsection.

\subsection{The case of integration on $k$ planes determined by choosing $k + 1$ points in $\Bbb R^{n}$}
\hskip0.6cm For $k + 1$ arbitrary points $x_{1},...,x_{k + 1}$ in $\Bbb R^{n}$ which are assumed to be in general position, i.e., which satisfy the condition
\vskip-0.2cm
\begin{equation}\hskip-8.75cm \textbf{rank}\left(\begin{array}{c}
x_{k + 1} - x_{1}\\
x_{k + 1} - x_{2}\\
...\\
x_{k + 1} - x_{k}\end{array}\right) = k,\end{equation}
denote by $\mathbf{H}\{x_{1},...,x_{k + 1}\}$ the unique $k$ plane passing through the points $x_{1},...,x_{k + 1}$. For every $x_{0}\in\Bbb R^{n}\setminus\{0\}$ denote by $\mathbf{H}_{x_{0}}$ the unique hyperplane whose closest point to the origin is $x_{0}$, i.e.,
\vskip-0.2cm
$$\hskip-7.4cm \mathbf{H}_{x_{0}} = \{x\in\Bbb R^{n}: \langle x - x_{0}, x_{0}\rangle = 0\}.$$
For every $m$ points $x_{1},...,x_{m}$ in $\Bbb R^{n}$ define the following set
$$\hskip-1.45cm\mathbf{U} = \mathbf{U}\{x_{1},...,x_{m}\} = \{x\in\Bbb R^{n}:\langle x - x_{1}, x\rangle = ... = \langle x - x_{m}, x\rangle = 0\}.$$
Observe that in particular $0\in\mathbf{U}$.

We choose the subset $\mathcal{H}'$ in the following way. We fix $k + 1$ points $x_{1},...,x_{k + 1}$ in $\Bbb R^{n}$ which are assumed to be in general position and then, for each $x_{0}\in\Bbb R^{n}\setminus\left(\mathbf{H}\{x_{1},...,x_{k + 1}\}\cup\mathbf{U}\{x_{1},...,x_{k + 1}\}\right)$, we define $\Sigma_{x_{0}}$ to be the unique $k$ plane which is the intersection of the $n - 1$ dimensional hyperplane $\mathbf{H}_{x_{0}}$ with the $k + 1$ plane $\mathbf{H}\{x_{0}, x_{1},..., x_{k + 1}\}$. More explicitly, we define
\newpage
$$\hskip-4cm\mathcal{H'} = \mathcal{H'}_{x_{1},...,x_{k + 1}} = \{\Sigma_{x_{0}} = \mathbf{H}\{x_{0},x_{1},...,x_{k + 1}\}\cap \mathbf{H}_{x_{0}}$$
$$\hskip2.5cm:x_{0}\in\Bbb R^{n}\setminus\left(\mathbf{H}\{x_{1},...,x_{k + 1}\}\cup\mathbf{U}\{x_{1},...,x_{k + 1}\}\right)\}.$$
Observe that for every plane $\Sigma_{x_{0}}$ we have that $x_{0}\in\Sigma_{x_{0}}$.

Now we need to show that the dimension of the set $\mathcal{H'} = \mathcal{H'}_{x_{1},...,x_{k + 1}}$ is equal to $n$ and that each plane in $\mathcal{H'}$ is of dimension $k$.

To prove that $\dim\mathcal{H'} = n$ observe that since $\mathcal{H'}$ is parameterized via the set $\Bbb R^{n}\setminus\left(\mathbf{H}\{x_{1},...,x_{k + 1}\}\cup\mathbf{U}\{x_{1},...,x_{k + 1}\}\right)$ which is of dimension $n$ we have that $\dim\mathcal{H'}\leq n$. In order to prove that $\dim\mathcal{H'} = n$ we will prove that for any two different points $x'$ and $x''$ the planes $\Sigma_{x'}$ and $\Sigma_{x''}$ are different. Indeed, since $\Sigma_{x'}\subset \mathbf{H}_{x'}$ it follows that its closest point to the origin is $x'$ and in the same way $x''$ is the closest point of $\Sigma_{x''}$ to the origin. Hence, if $\Sigma_{x'} = \Sigma_{x''}$ it follows that both $x'$ and $x''$ are the closest points of a $k$ plane to the origin and they are both different. This obviously leads to a contradiction.

To show that for each $x_{0}\in\Bbb R^{n}$ the plane $\Sigma_{x_{0}}$ is of dimension $k$ we first observe that since $x_{0}\in\Sigma_{x_{0}}$ then $\Sigma_{x_{0}}$ is a nonempty intersection of a hyperplane and a $k + 1$ plane in $\Bbb R^{n}$. Hence, $\dim\Sigma_{x_{0}}$ is equal to $k$ or $k + 1$ where the later case occurs when $\mathbf{H}\{x_{0},...,x_{k + 1}\}$ is contained in $\mathbf{H}_{x_{0}}$. This will imply in particular that $x_{1},...,x_{k + 1}\in\mathbf{H}_{x_{0}}$ and thus
\vskip-0.2cm
$$\hskip-6.3cm\langle x_{0} - x_{1}, x_{0}\rangle = ... = \langle x_{0} - x_{k + 1}, x_{0}\rangle = 0$$
which is a contradiction to the assumption that $x_{0}\notin\mathbf{U}\{x_{1},...,x_{k + 1}\}$.

Now, for every function $f$, defined in $\Bbb R^{n}$, which decays to zero fast enough at infinity, our aim is to recover $f$ from $\mathcal{R}_{k}f$ which is now restricted to the smaller set $\mathcal{H'}$ and thus our problem is well posed. For this we have the following result.

\begin{thm}

Let $f$ be a function in $C^{\infty}(\Bbb R^{n})$ (the space of infinitely differentiable functions in $\Bbb R^{n}$) which satisfies the decaying condition $f(x) = o(|x|^{-N})$ for some $N > k + 1$. Then, $f$ can be recovered from its $k$ dimensional Radon transform $\mathcal{R}_{k}f$ restricted to the set $\mathcal{H'}$ of $k$ dimensional planes.

\end{thm}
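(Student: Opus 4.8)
The plan is to reduce the problem to the classical Radon transform on $(k+1)$-planes, exactly as the introduction hints. The key geometric observation I would establish first is the following: fix a $(k+1)$-dimensional affine subspace $\Sigma\subset\Bbb R^n$ that contains the points $x_1,\dots,x_{k+1}$, and suppose moreover that the origin $o$ does \emph{not} lie on $\Sigma$ (or, if it does, one excludes that degenerate subspace). I claim that for such a $\Sigma$, every $k$-plane contained in $\Sigma$ arises as $\Sigma_{x_0}$ for some admissible $x_0$. Indeed, given a $k$-plane $\pi\subset\Sigma$, let $x_0$ be the foot of the perpendicular from $o$ to $\pi$ \emph{within the ambient space $\Bbb R^n$}; then by construction $\pi\subset\mathbf H_{x_0}$, and since $x_0\in\pi\subset\Sigma$ we have $\Sigma=\mathbf H\{x_0,x_1,\dots,x_{k+1}\}$, hence $\pi=\Sigma\cap\mathbf H_{x_0}=\Sigma_{x_0}$. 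One has to check that $x_0$ lies outside $\mathbf H\{x_1,\dots,x_{k+1}\}\cup\mathbf U\{x_1,\dots,x_{k+1}\}$; the first is generically true and the second follows because $x_0\notin\mathbf U$ is precisely the condition that $\mathbf H_{x_0}$ does not contain all of $x_1,\dots,x_{k+1}$, which holds as long as $\Sigma\not\subset\mathbf H_{x_0}$, i.e. as long as $o\notin\Sigma$ (this is the genericity we need).

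Next I would verify that the $(k+1)$-planes $\Sigma$ arising this way form a rich enough family: concretely, as $x_0$ ranges over a neighborhood of a generic point, $\mathbf H\{x_0,x_1,\dots,x_{k+1}\}$ sweeps out an $(n-k-1)$-parameter family of $(k+1)$-planes through $x_1,\dots,x_{k+1}$, and the union of all these $(k+1)$-planes is dense in $\Bbb R^n$ — in fact it is all of $\Bbb R^n$ minus a lower-dimensional set, since through any point $p$ in general position with $x_1,\dots,x_{k+1}$ there passes the $(k+1)$-plane $\mathbf H\{p,x_1,\dots,x_{k+1}\}$. So the collection $\{\Sigma\}$ of these $(k+1)$-planes covers a dense open subset of $\Bbb R^n$.

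With these two facts, the inversion proceeds as follows. Given $\mathcal R_k f$ on $\mathcal H'$, and given one of the admissible $(k+1)$-planes $\Sigma$, the first observation says that we know $\int_\pi f\,dm_\pi$ for \emph{every} $k$-plane $\pi\subset\Sigma$ — that is, we know the full classical (hyperplane) Radon transform of the restriction $f|_\Sigma$, viewing $\Sigma\cong\Bbb R^{k+1}$. Since $f\in C^\infty$ and $f(x)=o(|x|^{-N})$ with $N>k+1$, the restriction $f|_\Sigma$ is smooth and decays fast enough on $\Sigma\cong\Bbb R^{k+1}$ that any standard Radon inversion formula (e.g. the filtered back-projection / Radon's original formula in dimension $k+1$) recovers $f|_\Sigma$ pointwise. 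Doing this for every admissible $\Sigma$ recovers $f$ on the union of these $(k+1)$-planes, which by the second fact is a dense subset of $\Bbb R^n$; by continuity of $f$ we recover $f$ everywhere. I would close by noting the decay exponent $N>k+1$ is exactly what is needed for the integrals $\int_\pi f$ to converge and for the $(k+1)$-dimensional Radon inversion to be valid.

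The main obstacle — and the step deserving the most care — is the first one: showing cleanly that \emph{every} $k$-plane inside an admissible $\Sigma$ is realized as some $\Sigma_{x_0}$, together with pinning down exactly which $(k+1)$-planes $\Sigma$ are "admissible" (the foot-of-perpendicular argument fails precisely when $o\in\Sigma$, and one must also keep the relevant $x_0$ out of the excluded sets $\mathbf H\{x_1,\dots,x_{k+1}\}$ and $\mathbf U\{x_1,\dots,x_{k+1}\}$). Once the correspondence "$k$-planes in $\Sigma$" $\leftrightarrow$ "admissible $x_0$ with $x_0\in\Sigma$" is nailed down, the rest is a routine appeal to known Radon inversion plus a density/continuity argument, which is why the theorem is stated without an explicit formula.
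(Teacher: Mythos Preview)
Your approach is essentially the same as the paper's: reduce to the classical Radon transform on each $(k+1)$-plane $\Sigma$ through $x_1,\dots,x_{k+1}$, show that these $\Sigma$'s cover $\Bbb R^n$, and show that every hyperplane $\pi\subset\Sigma$ is some $\Sigma_{x_0}$. Your foot-of-perpendicular observation is in fact a cleaner way to see what the paper establishes through an explicit coordinate computation (the paper rotates so that $\Sigma$ becomes $\Bbb R^{k+1}\times\{u_0\}$ and then verifies that the closest point of $A\Sigma_{x_0}$ to the ``origin'' of $A\Sigma$ is $Ax_0$, which is exactly your statement that $x_0$ is the foot of the perpendicular from $o$ to $\pi$).

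One point to tighten: the equivalence ``$\Sigma\not\subset\mathbf H_{x_0}$, i.e.\ $o\notin\Sigma$'' is not right. For a fixed $\Sigma$ containing $x_1,\dots,x_{k+1}$, let $p^\ast$ be the foot of the perpendicular from $o$ to $\Sigma$; then $\Sigma\subset\mathbf H_{x_0}$ (equivalently $x_0\in\mathbf U$) happens exactly when $\pi$ passes through $p^\ast$, regardless of whether $o\in\Sigma$. Likewise $x_0\in\mathbf H\{x_1,\dots,x_{k+1}\}$ picks out another codimension-one family of $\pi$'s. So the exceptional $\pi$'s form a set of positive codimension inside the Grassmannian of hyperplanes of $\Sigma$, and the paper disposes of them exactly as you suggest at the end: by passing to limits in the integrals. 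Also, the paper shows the $(k+1)$-planes through $x_1,\dots,x_{k+1}$ cover all of $\Bbb R^n$ (not merely a dense subset), so the continuity argument is not needed there, though it does no harm.
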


\begin{proof}

From here and after we will assume, without loss of generality, that $\{x_{k + 1} - x_{1},...,x_{k + 1} - x_{k}\}$ is an orthonormal set. Since if this is not the case then by equation (2.1) it follows that there exists an orthonormal set of vectors $\{y_{1},...,y_{k}\}$ such that
\vskip-0.2cm
$$\hskip-4cm\textbf{\textrm{span}}\{y_{1},...,y_{k}\} = \textbf{\textrm{span}}\{x_{k + 1} - x_{1},...,x_{k + 1} - x_{k}\}.$$
Hence, if we denote $x_{i}^{\ast} = x_{k + 1} - y_{i}$ then it is easily verified that
$$\hskip-5.5cm\mathbf{H}\{x_{1},...,x_{k},x_{k + 1}\} = \mathbf{H}\{x_{1}^{\ast},...,x_{k}^{\ast}, x_{k + 1}\}.$$
Hence, for the set $\{x_{1}^{\ast},...,x_{k}^{\ast}, x_{k + 1}\}$ we obtain the same set of $k$ planes, i.e.,
$$\hskip-7.75cm\mathcal{H}_{x_{1},...,x_{k}, x_{k + 1}} = \mathcal{H}_{x_{1}^{\ast},...,x_{k}^{\ast}, x_{k + 1}}$$
and we also have that $\{x_{k + 1} - x_{1}^{\ast},...,x_{k + 1} - x_{k}^{\ast}\}$ is an orthonormal set.\\

The proof of Theorem 2.1 is divided into 5 parts.

In the first part we find necessary and sufficient conditions on a point $x_{0}$ in $\Bbb R^{n}\setminus\left(\mathbf{H}\{x_{1},...,x_{k + 1}\}\cup\mathbf{U}\{x_{1},...,x_{k + 1}\}\right)$ so that the $k$ plane $\Sigma_{x_{0}}$ is contained in a given $k + 1$ plane $\Sigma'$.

In the second part we define a special subset $\mathcal{H}'(n, k + 1)\subset\mathcal{H}(n, k + 1)$ of $k + 1$ planes.

In the third part we find, with the help of the first part, necessary and sufficient conditions on a point $x_{0}$ in $\Bbb R^{n}\setminus\left(\mathbf{H}\{x_{1},...,x_{k + 1}\}\cup\mathbf{U}\{x_{1},...,x_{k + 1}\}\right)$ for the $k$ plane $\Sigma_{x_{0}}$ to be contained in a given $k + 1$ plane in $\mathcal{H}'(n, k + 1)$.

In the fourth part we show that the union of all the $k + 1$ planes in $\mathcal{H}'(n, k + 1)$ is the whole space $\Bbb R^{n}$.

Lastly, in the fifth part we show that the $k$ planes in $\mathcal{H}'$, which are contained in a given $k + 1$ plane $\Sigma'$ in $\mathcal{H}'(n, k + 1)$, are in fact all the $k$ planes in $\Sigma'$ and thus the function $f$, in Theorem 2.1, can be recovered on $\Sigma'$. Since, by the fourth part, the union of the $k + 1$ planes in $\mathcal{H}'(n, k + 1)$ is the whole space $\Bbb R^{n}$ it follows that $f$ can be recovered in $\Bbb R^{n}$. This will finish the proof of Theorem 2.1.\\

\textbf{$\bullet$ Finding conditions on the point $x_{0}$ so that the $k$ plane $\Sigma_{x_{0}}$ is contained in a given $k + 1$ plane $\Sigma'$}: For $x_{0}\in\Bbb R^{n}\setminus\left(\mathbf{H}\{x_{1},...,x_{k + 1}\}\cup\mathbf{U}\{x_{1},...,x_{k + 1}\}\right)$ observe that the $k$ plane $\Sigma_{x_{0}}$ in $\mathcal{H'}$ can be parameterized as follows
\vskip-0.2cm
$$\hskip-5cm\Sigma_{x_{0}} = \left\{x_{0} + \lambda_{1}(x_{1} - x_{0}) + ... + \lambda_{k + 1}(x_{k + 1} - x_{0})\right.$$
$$\left.:\lambda_{1}\langle x_{1} - x_{0}, x_{0}\rangle + ... + \lambda_{k + 1}\langle x_{k + 1} - x_{0}, x_{0}\rangle = 0, \lambda\in\Bbb R^{k + 1}\right\}.$$
From the above parametrization we obtained for $\Sigma_{x_{0}}$ it follows that for every $\omega\in\Bbb S^{n - 1}$ and $c\in\Bbb R$ the $k$ plane $\Sigma_{x_{0}}$ is contained in the hyperplane $\langle x, \omega \rangle = c$ if and only if the following two conditions are satisfied
\vskip0.2cm
(i) $\langle x_{0}, \omega\rangle = c$,
\vskip0.2cm
(ii) There exists an index $1\leq i\leq k + 1$ such that $\langle x_{i} - x_{0}, x_{0}\rangle\neq0$ and
\vskip-0.8cm
$$\hskip0.65cm\langle x_{i} - x_{0}, x_{0}\rangle\langle x_{j} - x_{0}, \omega\rangle = \langle x_{j} - x_{0}, x_{0}\rangle\langle x_{i} - x_{0},\omega\rangle\hskip0.25cm\textrm{for}\hskip0.25cm j = 1,...,i - 1,i + 1,...,k + 1.$$
\vskip0.2cm
Now, for a given $k + 1$ plane $\Sigma'$ we would like to find necessary and sufficient conditions on $x_{0}$ so that the $k$ plane $\Sigma_{x_{0}}$ is contained in $\Sigma'$. Let us assume that $\Sigma'$ is given by the following system of equations
\vskip-0.2cm
\begin{equation}\hskip-8.5cm\langle x, \omega_{i}\rangle = \mu_{i}, i = 1,...,n - k - 1\end{equation}
where $\omega_{1},...,\omega_{n - k - 1}$ form an orthonormal set and where $\mu_{1},...,\mu_{n - k - 1}\in\Bbb R$. From the above analysis it follows that the $k$ plane $\Sigma_{x_{0}}$ is contained in a $k + 1$ plane $\Sigma'$, given by the intersection of all hyperplanes given by (2.2), if and only if the following two conditions are satisfied
\vskip0.2cm
(i*) $\langle x_{0}, \omega_{i}\rangle = \mu_{i}, i = 1,...,n - k - 1$,
\vskip0.2cm
(ii*) For every $1\leq i'\leq n - k - 1$ there exists an index $1\leq i\leq k + 1$ such that

$\langle x_{i} - x_{0}, x_{0}\rangle\neq0$ and such that
\vskip-0.75cm
$$\hskip0.65cm\langle x_{i} - x_{0}, x_{0}\rangle\langle x_{j} - x_{0}, \omega_{i'}\rangle = \langle x_{j} - x_{0}, x_{0}\rangle\langle x_{i} - x_{0}, \omega_{i'}\rangle\hskip0.25cm\textrm{for}\hskip0.25cm j = 1,...,i - 1,i + 1,...,k + 1.$$

\textbf{$\bullet$ Defining the set $\mathcal{H}'(n, k + 1)$ of $k + 1$ planes}: Now, let us look only on $k + 1$ planes given as the intersection of the $n - k - 1$ hyperplanes given by (2.2) where $\omega_{1},...,\omega_{n - k - 1}$ satisfy
\vskip-0.2cm
\begin{equation}\hskip-5cm\langle x_{k + 1} - x_{i}, \omega_{j}\rangle = 0, i = 1,...,k, j = 1,..., n - k - 1\end{equation}
and where the parameters $\mu_{1},...,\mu_{n - k - 1}$ are then given by the following equations
\vskip-0.2cm
\begin{equation}\hskip-3.25cm\mu_{j} = \langle x_{k + 1}, \omega_{j}\rangle ( = \langle x_{i}, \omega_{j}\rangle, i = 1,...,k), j = 1,..., n - k - 1.\end{equation}
Denote this family of $k + 1$ planes by $\mathcal{H}'(n, k + 1)$. That is,
\vskip-0.2cm
$$\hskip-11.65cm\mathcal{H}'(n, k + 1)$$
$$\hskip-3.15cm = \{\Sigma_{\omega}' = \{x\in\Bbb R^{n}:\langle x - x_{k + 1}, \omega_{i}\rangle = 0, i = 1,...,n - k - 1\}$$
$$\hskip0.6cm:\omega = (\omega_{1},...,\omega_{n - k - 1}), \mathrm{where\hskip0.1cm} \omega_{i}, i = 1,...,n - k - 1 \mathrm{ \hskip0.1cm form \hskip0.1cm an \hskip0.1cm orthonormal} $$
$$\hskip-1.5cm\mathrm{set \hskip0.1cm in \hskip0.1cm} \Bbb S^{n - 1} \mathrm{\hskip0.1cm and \hskip0.1cm belong \hskip0.1cm  to \hskip0.1cm } \textbf{\textrm{span}}\{x_{k + 1} - x_{1},...,x_{k + 1} - x_{k}\}^{\bot}\}.$$

\textbf{$\bullet$ Finding conditions on the point $x_{0}$ so that the $k$ plane $\Sigma_{x_{0}}$ is contained in the $k + 1$ plane $\Sigma_{\omega}'$ in $\mathcal{H}'(n, k + 1)$}: Let us take a $k + 1$ plane $\Sigma_{\omega}'$ in $\mathcal{H}'(n, k + 1)$. Then, for a $k$ plane $\Sigma_{x_{0}}$ to be contained in $\Sigma_{\omega}'$ the point $x_{0}$ in $\Bbb R^{n}\setminus\left(\mathbf{H}\{x_{1},...,x_{k + 1}\}\cup\mathbf{U}\{x_{1},...,x_{k + 1}\}\right)$ needs to satisfy only condition (i*) since then condition (ii*) is also satisfied. Indeed, since $x_{0}\notin\mathbf{U}\{x_{1},...,x_{k + 1}\}$ there exists an index $i$ such that $\langle x_{i} - x_{0}, x_{0}\rangle\neq0$. Using the fact that for $\Sigma_{\omega}'$ we have, from equation (2.3), that $\langle x_{i}, \omega_{i'}\rangle = \langle x_{k + 1}, \omega_{i'}\rangle$, it follows that
\vskip-0.2cm
$$\hskip-3.3cm\langle x_{i} - x_{0}, \omega_{i'}\rangle = \langle x_{i}, \omega_{i'}\rangle  - \langle x_{0}, \omega_{i'}\rangle = \langle x_{k + 1}, \omega_{i'}\rangle  - \mu_{i'} = 0$$
where in the second passage we used condition (i*) and in the last passage we used equation (2.4). In the same way we can show that $\langle x_{j} - x_{0}, \omega_{i'}\rangle = 0$ and thus condition (ii*) is obviously satisfied.

Thus, it follows that the $k$ plane $\Sigma_{x_{0}}$ is contained in the $k + 1$ plane $\Sigma_{\omega}'$ in $\mathcal{H}'(n, k + 1)$ if and only if
\vskip-0.2cm
\begin{equation}\hskip-6.5cm\langle x_{0}, \omega_{i}\rangle = \langle x_{k + 1}, \omega_{i}\rangle, i = 1,...,n - k - 1.\end{equation}

\textbf{$\bullet$ The $k + 1$ planes in $\mathcal{H}'(n,k + 1)$ cover the whole space $\Bbb R^{n}$}: Our aim is to reconstruct $f$ on every $k + 1$ plane $\Sigma_{\omega}'$ in $\mathcal{H}'(n, k + 1)$. If this can be done then $f$ can be recovered on the whole of $\Bbb R^{n}$. Indeed, we only need to show that for every $z_{0}\in\Bbb R^{n}$ there exists a $k + 1$ plane $\Sigma_{\omega}'$ in $\mathcal{H}'(n, k + 1)$ such that $z_{0}\in\Sigma_{\omega}'$. That is, we need to find $n - k - 1$ orthonormal vectors $\omega_{1},...,\omega_{n - k - 1}$ such that each of them is orthogonal to $x_{k + 1} - x_{1},...,x_{k + 1} - x_{k}$ and such that the following equations are satisfied
\vskip-0.2cm
$$\hskip-6.5cm\langle z_{0}, \omega_{i}\rangle = \langle x_{k + 1}, \omega_{i}\rangle, i = 1,...,n - k - 1.$$
That is, the orthonormal system $\omega_{1},...,\omega_{n - k - 1}$ needs to be orthogonal to  $\mathcal{W} = \textbf{\textrm{span}}\{x_{k + 1} - x_{1},...,x_{k + 1} - x_{k}, x_{k + 1} - z_{0}\}$. Since $\mathcal{W}$ is spanned by $k + 1$ vectors it follows that $\dim\mathcal{W}\leq k + 1$ and thus $\dim\mathcal{W}^{\bot}\geq n - k - 1$. Thus, we can find an orthonormal system $\omega_{1},...,\omega_{n - k - 1}$ which is in $\mathcal{W}^{\bot}$. The corresponding parameters $\mu_{1},...,\mu_{n - k - 1}$ are then uniquely determined by equation (2.4).\\

\textbf{$\bullet$ Recovering $f$ on each $k + 1$ plane in $\mathcal{H}'(n, k + 1)$}: Let us take a $k + 1$ plane $\Sigma_{\omega}'$ in $\mathcal{H}'(n, k + 1)$ and show how $f$ can be recovered on $\Sigma_{\omega}'$. As we previously mentioned, we can assume that $\{x_{k + 1} - x_{1},...,x_{k + 1} - x_{k}\}$ is an orthonormal set. Hence, since the vectors $\omega_{1},...,\omega_{n - k - 1}$ also form an orthonormal set then from equation (2.3) it follows that $\{x_{k + 1} - x_{1},...,x_{k + 1} - x_{k}, \omega_{1},...,\omega_{n - k - 1}\}$ is an orthonormal set. Thus, there exists a rotation $A$ such that
$$\hskip-0.7cm A(x_{k + 1} - x_{1}) = e_{1},...,A(x_{k + 1} - x_{k}) = e_{k}, A\omega_{1} = e_{k + 2},...,A\omega_{n - k - 1} = e_{n}.$$
Let us choose a point $x_{0}\in\Bbb R^{n}\setminus\left(\mathbf{H}\{x_{1},...,x_{k + 1}\}\cup\mathbf{U}\{x_{1},...,x_{k + 1}\}\right)$ such that the $k$ plane $\Sigma_{x_{0}}$ is contained in the $k + 1$ plane $\Sigma_{\omega}'$. If we denote $x_{0}' = Ax_{0}, x_{k + 1}' = Ax_{k + 1}$ then condition (2.5) is equivalent to
\vskip-0.2cm
\begin{equation}\hskip-5cm\langle x_{0}', e_{i + k + 1}\rangle = \langle x_{k + 1}', e_{i + k + 1}\rangle, i = 1,..., n - k - 1.\end{equation}
The assumption that $x_{0}$ is not in the unique $k$ plane which passes through $x_{1},...,x_{k + 1}$ is equivalent to the assumption that $\textbf{\textrm{rank}}\{x_{k + 1} - x_{0}, x_{k + 1} - x_{1},...,x_{k + 1} - x_{k}\} = k + 1$ or equivalently that $\textbf{\textrm{rank}}\{x_{k + 1}' - x_{0}', e_{1},...,e_{k}\} = k + 1$ and from equation (2.6) this is equivalent to $x_{0, k + 1}'\neq x_{k + 1, k + 1}'$. The assumption that $x_{0}\notin\mathbf{U}\{x_{1},...,x_{k + 1}\}$ is equivalent to the assumption that there exists an index $1\leq i\leq k + 1$ such that $\langle x_{i} - x_{0}, x_{0}\rangle\neq0$ or equivalently that $\langle x_{i}' - x_{0}', x_{0}'\rangle\neq0$ where we denote $x_{i}' = Ax_{i}$.

Hence, the $k$ plane $A\Sigma_{x_{0}}$ is contained in the $k + 1$ plane $A\Sigma_{\omega}'$ if and only if $x_{0}' = Ax_{0}$ has the form
\vskip-0.2cm
\begin{equation}\hskip-4.75cm x_{0}' = (u, u_{0}), u_{0} =  (x_{k + 1,k + 2}',...,x_{k + 1, n}'), u\in\Bbb R^{k + 1}\end{equation}
and which satisfies the following condition
\vskip-0.2cm
$$\hskip-4.4cm\hskip0.1cm u_{k + 1}\neq x_{k + 1, k + 1}', \exists i, 1\leq i\leq k + 1, \langle x_{i}' - x_{0}', x_{0}'\rangle\neq0$$
or equivalently the condition
$$\hskip-2.1cm\textrm{(iii)}\hskip0.1cm u_{k + 1}\neq x_{k + 1, k + 1}', \exists i, 1\leq i\leq k + 1, \langle x_{i}' - (u, u_{0}), (u, u_{0})\rangle\neq0.$$
Let us recall again that the $k + 1$ plane $\Sigma_{\omega}'$ is given by the system of equations
$$\hskip-6.6cm\langle x, \omega_{i}\rangle = \langle x_{k + 1}, \omega_{i}\rangle, i = 1,...,n - k - 1$$
or equivalently by the system of equations
$$\hskip-4.7cm\langle Ax, e_{k + i + 1}\rangle = \langle x_{k + 1}', e_{k + i + 1}\rangle, i = 1,...,n - k - 1.$$
Hence, the $k + 1$ plane $A\Sigma_{\omega}'$ is given by
\vskip-0.2cm
$$\hskip-3.9cm A\Sigma_{\omega}' = \{(u,u_{0}):u\in\Bbb R^{k + 1}, u_{0} = (x_{k + 1,k + 2}',...,x_{k + 1, n}')\}$$
and for this plane we define its origin to be the point $(\overline{0},u_{0})$. Now, we claim that for each point $x_{0}'$ which is of the form (2.7) and satisfies condition (iii) the closest point of the $k$ plane $A\Sigma_{x_{0}}$ to the origin in $A\Sigma_{\omega}'$ is obtained at $x_{0}'$. Indeed, the closest point of $\Sigma_{x_{0}}$ to the origin is $x_{0}$ (since $\Sigma_{x_{0}}\subset\mathbf{H}_{x_{0}}$) and thus the closest point of $A\Sigma_{x_{0}}$ to the origin is $x_{0}' = Ax_{0}$. Now, if there exists a point $x_{0}''\in A\Sigma_{x_{0}}\subset A\Sigma_{\omega}'$ whose distance to the origin $(\overline{0},u_{0})$ in $A\Sigma_{\omega}'$ is smaller than that of $x_{0}'$ then we can assume that
$$\hskip-6.3cm x_{0}' = (u, u_{0}), x_{0}'' = (u'', u_{0}), u, u''\in\Bbb R^{k + 1}$$
and where $\left|u''\right| < \left|u\right|$. This will imply that $\left|x_{0}''\right| < \left|x_{0}'\right|$ an so $x_{0}''$ is a point in $A\Sigma_{x_{0}}$ whose distance to the origin in $\Bbb R^{n}$ is smaller than that of $x_{0}'$. This is obviously a contradiction.

Hence, for a point $x_{0}$ such that $x_{0}' = Ax_{0}$ is of the form $(u, u_{0})$, where $u$ satisfies condition (iii), we have that $A\Sigma_{x_{0}}\subset A\Sigma_{\omega}'$ and that its closest point to the origin in $A\Sigma_{\omega}'$ is obtained at $x_{0}'$. This means that if we project the $k + 1$ plane $\Sigma_{\omega}' = \Bbb R^{k + 1}\times\{u_{0}\}$ to $\Bbb R^{k + 1}$ then the closest point, of the projection of $A\Sigma_{x_{0}}$, to the origin $\overline{0}$ in $\Bbb R^{k + 1}$ is obtained at the point $u$.

Thus, it follows that the projection of $A\Sigma_{x_{0}}$ to $\Bbb R^{k + 1}$ is the following $k$ dimensional hyperplane
\vskip-0.2cm
\begin{equation}\hskip-7.5cm \Pi_{u} = \left\{\langle x' - u, u \rangle = 0: x'\in\Bbb R^{k + 1}\right\}.\end{equation}
Now, the restriction that the point $u$ must satisfy condition (iii) can be omitted since the set of points $u$ which do not satisfy this condition is of codimension $1$ in $\Bbb R^{k + 1}$. Thus, by passing to limits when preforming integration we can obviously obtain the integrals on any hyperplane of the form (2.8). Thus, we can assume that $u\in\Bbb R^{k + 1}$. Hence, if $u = r\sigma$ where $r\geq0, \sigma\in\Bbb S^{k}$ then the $k$ plane $\Pi_{u}$ is given by
\vskip-0.2cm
$$\hskip-8cm \Pi_{r, \sigma}' = \left\{\langle x', \sigma\rangle = r: x'\in\Bbb R^{k + 1}\right\}.$$
Observe that every $k$ dimensional hyperplane in $\Bbb R^{k + 1}$ can be written as $\Pi_{r, \sigma}'$ for some fixed $r\geq0, \sigma\in\Bbb S^{k}$.
Every function in $C_{0}^{\infty}(\Bbb R^{k + 1})$ which decays to zero faster than $|x|^{-N}$, $N > k + 1$, at infinity can be recovered from its classical Radon transform (see \cite[Chap. 1.3, Theorem 3.1]{9}) which integrates the function on every $k$ dimensional hyperplane in $\Bbb R^{k + 1}$. From our assumption on the function $f$ it follows that the restriction of $g = f\circ A^{-1}$ to the $k + 1$ plane $A\Sigma_{\omega}'$ satisfies these conditions and thus it can be recovered on this plane. Equivalently, this means that $f$ can be recovered on $\Sigma_{\omega}'$. Thus, Theorem 2.1 is proved.
\end{proof}

\subsection{The case of integration on lines in $\Bbb R^{3}$ with a fixed distance from a given point}

\hskip0.6cm For $r > 0$, a point $p$ in $\Bbb R^{3}$ and a compactly supported continuous function $f$, defined on $\Bbb R^{3}$, we consider the following problem. Suppose that the integrals of $f$ are given on each line whose distance from the point $p$ is equal to $r$ and we would like to recover $f$ from this family $\Pi$ of lines. Since the intersection of the interior of the sphere $\mathrm{S}' = \mathrm{S}(p, r)$ with any line in $\Pi$ is empty then obviously $f$ cannot be recovered inside $\mathrm{S}'$ and thus we can assume that each function in question vanishes inside $\mathrm{S}'$. Without loss of generality we can assume that $p = \overline{0}, r = 1$. That is, our family $\Pi$ consists of all the lines which are tangent to the unit sphere $\Bbb S^{2}$ and our aim is to recover a continuous compactly supported function, which vanishes inside $\Bbb S^{2}$, given its integrals on each line in $\Pi$. Since $\Bbb S^{2}$ is of dimension two and since, for each point $p\in\Bbb S^{2}$, the family of lines which are tangent to $\Bbb S^{2}$ and pass through $p$ is one dimensional it follows that $\dim\Pi = 3$. Hence, our problem is well-posed.

Observe that for a point $x\in\Bbb R^{3}$, satisfying $|x|\geq1$, the family of all lines which are tangent to the unit sphere $\Bbb S^{2}$ and pass through $x$ is a cone with the apex at $x$ which is tangent to $\Bbb S^{2}$ where the tangency set is a circle on $\Bbb S^{2}$. Let us denote by $C_{x}$ the above considered cone and denote by $\Lambda$ the family of all such cones which are obtained by taking points $x\in\Bbb R^{3}$ satisfying $|x|\geq1$. Observe that if $x = \lambda e_{3}$ where $\lambda\geq1$ then the family $\Lambda$ can be parameterized as follows
$$\hskip-7.3cm\Lambda = \{A^{-1}C_{\lambda e_{3}}:A\in SO(2), \lambda\geq1\}$$
where $SO(2)$ denotes the group of rotations in $\Bbb R^{3}$. For every $\lambda\geq1$ the cone $C_{\lambda e_{3}}$ can be parameterized as follows
\begin{equation}\hskip-2.4cm C_{\lambda e_{3}} = \left\{\left(t\cos\phi, t\sin\phi, \lambda + t\sqrt{\lambda^{2} - 1}\right): (\phi, t)\in[-\pi,\pi)\times\Bbb R\right\}.\end{equation}

Now, let $f$ be a continuous function, defined in $\Bbb R^{3}$, with compact support. Then, our data consists of the following integrals
\begin{equation}\hskip-1.35cm G(A, \lambda) = \int_{A^{-1}C_{\lambda e_{3}}}f(x)dS_{x} = \int_{C_{\lambda e_{3}}}f\left(A^{-1}x\right)dS_{x}, A\in SO(2), \lambda\geq1.\end{equation}

\begin{remark}

Observe that with the parametrization (2.9) of the cone $C_{\lambda e_{3}}$ we have that its infinitesimal area measure is given by $dC_{\lambda e_{3}} = \lambda|t|d\phi dt$. However, in our case the expression $dS_{x}$ given in equation (2.10) is chosen such that $dS_{x} = \lambda d\phi dt$ (i.e., the term $|t|$ is omitted). We can choose $dS_{x}$ in this way since for every $\lambda\geq1$ we know the factorization of $C_{\lambda e_{3}}$ into lines passing through its apex. Each such line, given by the parametrization (2.9), where the parameter $\phi$ is fixed, has infinitesimal length measure given by $\lambda dt$. Thus, we can just integrate on this family of lines with respect to $\phi$ with the infinitesimal angular measure $d\phi$.

\end{remark}

Our aim is to recover $f$ from the family of integrals given by equation (2.10). The inversion formula is given in Theorem 2.3 below. Before formulating and proving Theorem 2.3 we introduce some notations and definitions.
\vskip0.2cm
For every integer $m\geq0$ denote by $C_{m}^{\frac{1}{2}}$ the Gegenbauer polynomial of order $1 / 2$ and degree $m$.

Let $T_{A}, A\in SO(2)$ be the quasi-regular representation of the group $SO(2)$. That is, for a function $g\in L^{2}\left(\Bbb S^{2}\right)$ the operator $T$ acts on $g$ by $T_{A}(g) = g\circ A^{-1}$. Let $Y_{k}^{m}, m\geq0, |k|\leq m$ be an orthonormal complete system of spherical harmonics in $\Bbb S^{2}$ and let $\tau_{k,p}^{m}, m\geq0, |k|,|p|\leq m$ be the components of the representation matrix of $T$ with respect to this family of spherical harmonics. That is
\vskip-0.2cm
$$\hskip-6.7cm Y_{k}^{m}\left(A^{- 1}\omega\right) = \sum_{p = -m}^{m}\tau_{k,p}^{m}(A)Y_{p}^{m}(\omega).$$
The functions $\tau_{k,p}^{m}$ satisfy the following orthogonality relations (see \cite[Chap. 1.5.2]{16}).
\[\hskip-2.5cm\int_{SO(2)}\tau_{k,p}^{m}(A)\overline{\tau_{k',p'}^{m'}(A)}dA =
\begin{cases}
0, \hskip0.6cm(m,k,p)\neq(m',k',p')\\
\frac{1}{2m + 1}, \textrm{o.w}.
\end{cases}\]
Now we can formulate Theorem 2.3.

\begin{thm}
Let $f$ be a continuous function, defined in $\Bbb R^{3}$, which has compact support and vanishes inside $\Bbb S^{2}$ and let
\vskip-0.2cm
$$\hskip-7cm f(r\omega) = \sum_{m = 0}^{\infty}\sum_{k = - m}^{m}f_{m,k}(r)Y_{k}^{m}(\omega)$$
be the expansion of $f$ into spherical harmonics. Denote
\[\hskip1cm\widetilde{G}_{m,k,p'}(\alpha) = \frac{c_{m}}{Y_{p'}^{m}(e_{3})}\cdot G_{m,k,p'}\left(\frac{1}{\cos\alpha}\right), |\alpha|\leq\frac{\pi}{2}, g_{m,k}(\alpha) =
\begin{cases}
\widetilde{f}_{m,k}(\alpha), \hskip0.15cm|\alpha| \leq \frac{\pi}{2},\\
0, \hskip0.45cm\frac{\pi}{2} < |\alpha|\leq\pi,
\end{cases}
\]
where
$$\hskip-0.9cm G_{m,k,p}(\lambda) = \int_{SO(2)}G(A, \lambda)\overline{\tau_{m,k}^{p}(A)}dA, \widetilde{f}_{m,k}(\alpha) = \frac{1}{\cos^{2}\alpha}f_{m,k}\left(\frac{1}{\cos\alpha}\right),$$
$c_{m} = (2m + 1)C_{m}^{\frac{1}{2}}(1) / 2\pi$ and where $\widetilde{G}_{m,k,p'}$ is defined on the whole interval $[-\pi,\pi]$ by defining
\vskip-0.2cm
$$\hskip-4.5cm\widetilde{G}_{m,k,p'}(\alpha) = (- 1)^{m}\widetilde{G}_{m,k,p'}(\pi - |\alpha|), \frac{\pi}{2}\leq|\alpha|\leq\pi.$$
Then, for the following Fourier expansions on the interval $[-\pi,\pi]$
$$\hskip0.7cm\widetilde{G}_{m,k,p'}(\alpha) = \sum_{n = -\infty}^{\infty}c_{1,n}e^{in\alpha}, g_{m,k}(\alpha) = \sum_{n = -\infty}^{\infty}c_{2,n}e^{in\alpha}, C_{m}^{\frac{1}{2}}\left(\cos\alpha\right) = \sum_{n = -\infty}^{\infty}c_{3,n}e^{in\alpha}$$
we have that $c_{2,n} = c_{1,n} / \left(2\pi\cdot c_{3,n}\right)$.
\end{thm}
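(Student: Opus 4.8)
The plan is to expand $f$ into spherical harmonics, use the rotational symmetry of the family of cones to split the forward map into one independent one--dimensional problem for each spherical--harmonic mode, and finally recast that one--dimensional problem as a convolution on the circle, whose inversion is exactly the comparison of Fourier coefficients asserted in the theorem.

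\emph{Step 1 (reduction to radial components).} Substituting $f(r\omega)=\sum_{m,k}f_{m,k}(r)Y_k^m(\omega)$ together with $Y_k^m(A^{-1}\omega)=\sum_p\tau_{k,p}^m(A)Y_p^m(\omega)$ into $G(A,\lambda)=\int_{C_{\lambda e_3}}f(A^{-1}x)\,dS_x$ gives $G(A,\lambda)=\sum_{m,k,p}\tau_{k,p}^m(A)\int_{C_{\lambda e_3}}f_{m,k}(|x|)Y_p^m(x/|x|)\,dS_x$. Multiplying by $\overline{\tau_{k,p}^m(A)}$, integrating over the rotation group $SO(2)$ and using the quoted orthogonality relations (Schur orthogonality, $2m+1$ being the dimension of the $m$-th representation) isolates one term,
$$G_{m,k,p}(\lambda)=\frac{1}{2m+1}\int_{C_{\lambda e_3}}f_{m,k}(|x|)\,Y_p^m\!\left(\frac{x}{|x|}\right)dS_x .$$
Since $C_{\lambda e_3}$, the measure $dS_x$ and the radial weight $f_{m,k}(|x|)$ are invariant under rotations about the $e_3$-axis, while $Y_p^m$ acquires a factor $e^{-ip\theta}$ under the rotation by $\theta$ about that axis (we use the spherical harmonics adapted to the $e_3$-axis, so $Y_0^m$ is zonal), this integral vanishes unless $p=0$. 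Thus all information sits in $G_{m,k,0}$, and $\widetilde G_{m,k,p'}$ is meaningful precisely for $p'=0$ (consistently with $Y_{p'}^m(e_3)=0$ for $p'\neq0$).

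\emph{Step 2 (the cone integral becomes a circular convolution).} Using parametrization (2.9) and $dS_x=\lambda\,d\phi\,dt$, and noting that the integrand above is independent of $\phi$, the $\phi$-integration contributes a factor $2\pi$; writing the zonal harmonic as $Y_0^m(\cdot)=Y_0^m(e_3)\,C_m^{\frac12}(\langle\cdot,e_3\rangle)/C_m^{\frac12}(1)$ (recall $C_m^{\frac12}$ is the Legendre polynomial) leaves a single integral in $t$. The substitution $s=\lambda t+\sqrt{\lambda^2-1}$ turns it into an integral in $s\in\Bbb R$ with $|x|^2=s^2+1$ and $\lambda\,dt=ds$; then $s=\tan\beta$ ($\beta\in(-\pi/2,\pi/2)$) gives $|x|=1/\cos\beta$ and $\lambda\,dt=ds=d\beta/\cos^2\beta$, so the weight becomes exactly $\widetilde f_{m,k}(\beta)\,d\beta$. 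A short computation, using $\sqrt{\lambda^2-1}=\tan\alpha$ when $\lambda=1/\cos\alpha$, shows $x_3/|x|=\cos(\alpha-\beta)$. The factor $Y_0^m(e_3)$ cancels against the one in the definition of $\widetilde G_{m,k,p'}$, and since $c_m=(2m+1)C_m^{\frac12}(1)/2\pi$ the entire multiplicative prefactor collapses to $1$, leaving, for $|\alpha|\le\pi/2$,
$$\widetilde G_{m,k,0}(\alpha)=\int_{-\pi/2}^{\pi/2}\widetilde f_{m,k}(\beta)\,C_m^{\frac12}(\cos(\alpha-\beta))\,d\beta=\int_{-\pi}^{\pi}g_{m,k}(\beta)\,C_m^{\frac12}(\cos(\alpha-\beta))\,d\beta ,$$
the last equality because $\widetilde f_{m,k}(\beta)=\cos^{-2}\beta\,f_{m,k}(1/\cos\beta)$ is supported in a compact subinterval of $(-\pi/2,\pi/2)$ (here the compact support of $f$ and its vanishing inside $\Bbb S^2$ are used) and $g_{m,k}$ is its zero--extension.

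\emph{Step 3 (consistency of the extension, then conclusion).} Write $\Phi(\alpha)$ for the right--hand side above, a $2\pi$-periodic function. One must check that $\Phi$ obeys the reflection rule $\Phi(\alpha)=(-1)^m\Phi(\pi-|\alpha|)$ used to extend $\widetilde G_{m,k,0}$ to $\pi/2\le|\alpha|\le\pi$; granting this, $\widetilde G_{m,k,0}=\Phi$ on all of $[-\pi,\pi]$. Two elementary facts suffice: $C_m^{\frac12}(-z)=(-1)^mC_m^{\frac12}(z)$ (parity of Legendre polynomials) and the evenness of $g_{m,k}$ (since $\cos$ is even, so is $\widetilde f_{m,k}$, and zero--extension preserves evenness); combining $\cos(\pi-u)=-\cos u$ with the change of variables $\beta\mapsto-\beta$ gives $\Phi(\pi-\alpha)=(-1)^m\Phi(\alpha)$, and evenness of $\Phi$ covers $\alpha<0$. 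Hence $\widetilde G_{m,k,0}=g_{m,k}\ast C_m^{\frac12}(\cos(\cdot))$ as a circular convolution on $[-\pi,\pi]$, and comparing Fourier coefficients (the $n$-th coefficient of $\int_{-\pi}^{\pi}g(\beta)h(\alpha-\beta)\,d\beta$ equals $2\pi$ times the product of the $n$-th coefficients of $g$ and $h$) yields $c_{1,n}=2\pi c_{2,n}c_{3,n}$, i.e. $c_{2,n}=c_{1,n}/(2\pi c_{3,n})$ for every $n$ with $c_{3,n}\neq0$, which is the claim. The main obstacle is Step 2: carrying out the two substitutions cleanly, verifying the identity $x_3/|x|=\cos(\alpha-\beta)$, and the constant bookkeeping of $c_m$, $Y_0^m(e_3)$ and $C_m^{\frac12}(1)$ so that the prefactor is exactly $1$; Steps 1 and 3 are soft (representation--theoretic orthogonality, a parity argument, and the convolution theorem on the circle).
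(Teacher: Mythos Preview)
Your proof is correct and follows the same overall strategy as the paper (isolate a single spherical-harmonic mode, rewrite the cone integral as a circular convolution, then invoke the convolution theorem), but two tactical choices differ and are worth noting.

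In Step~1 you specialize to the standard basis of spherical harmonics adapted to the $e_3$-axis, so that only $p'=0$ survives. The paper instead keeps the basis arbitrary and handles the $\phi$-integration via the Funk--Hecke-type identity (2.12), which produces the factor $Y_{p'}^m(e_3)$ for any $p'$; this is why the theorem is stated for every $p'$ with $Y_{p'}^m(e_3)\neq 0$. Your argument covers the theorem as stated only once one fixes the basis; if the orthonormal system is given in advance and is not adapted to $e_3$, you would need (2.12) in place of the rotational-invariance shortcut.

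Your Step~2 is genuinely cleaner than the paper's. The paper splits the $t$-integral at the vertex of the parabola $t\mapsto |x(t)|^2$, substitutes $x=|x(t)|$ on each half, then performs the further changes $x\mapsto 1/x'$, $\lambda\mapsto 1/\lambda'$, and finally $x'=\cos\beta$, $\lambda'=\cos\alpha$, ending with two integrals containing $C_m^{1/2}(\cos(\alpha\pm\beta))$ that must be recombined using the evenness of $\widetilde f_{m,k}$. Your single substitution $s=\lambda t+\sqrt{\lambda^2-1}$ followed by $s=\tan\beta$ lands directly on $\int_{-\pi/2}^{\pi/2}\widetilde f_{m,k}(\beta)\,C_m^{1/2}(\cos(\alpha-\beta))\,d\beta$ with no splitting; the identity $x_3/|x|=\cos(\alpha-\beta)$ and the Jacobian bookkeeping all check out. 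Step~3 matches the paper's verification of the reflection rule and the final appeal to the circular convolution theorem.
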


\begin{remark}

In Theorem 2.3 the parameter $p', |p'|\leq m$ can be any parameter which satisfies $Y_{p'}^{m}(e_{3})\neq0$. Observe that by Theorem 2.3 we can recover the function $g_{m,k}$ and thus we can also recover the function $\widetilde{f}_{m,k}$ which is equivalent of recovering $f_{m,k}$ on $[1,\infty)$. Since, by our assumption, $f$ vanishes inside $\Bbb S^{2}$ it follows that $f_{m,k}$ vanishes in $[0,1]$ and thus we can extract $f_{m,k}$ on the whole line $\Bbb R$. Hence, since we can obtain $f_{m,k}$ for every $m\geq0, |k|\leq m$ then obviously the function $f$ can be recovered. Also, observe that in the formulation of Theorem 2.3 we implicitly assumed that $f$ satisfies the following condition 
$$\hskip-3cm\textrm{if}\hskip0.2cm g_{m,k}(\alpha) = \sum_{n = -\infty}^{\infty}c_{2,n}e^{in\alpha}\hskip0.2cm\mathrm{and}\hskip0.2cm C_{m}^{\frac{1}{2}}\left(\cos\alpha\right) = \sum_{n = -\infty}^{\infty}c_{3,n}e^{in\alpha}$$
$$\hskip-9cm \textrm{then}\hskip0.2cm c_{2,n}\neq0\Rightarrow c_{3,n}\neq0$$
so that in the equation $c_{2,n} = c_{1,n} / \left(2\pi\cdot c_{3,n}\right)$ the division of the right hand side by $c_{3,n}$ is justified. If this condition is not satisfied then, as the proof of Theorem 2.3 will show, uniqueness does not occur for the function $f$ (i.e, the set of integrals of $f$ on lines which are tangent to the unit sphere is not enough in order to reconstruct $f$). For example, if $f$ is a nonzero radial function which vanishes inside the unit sphere and has compact support then our data consists of the integral of $f$ on just one line with unit distance from the origin. But obviously this is not enough to reconstruct $f$ and indeed it can be easily shown that $f$ does not satisfy the last condition.

\end{remark}

\vskip0.2cm

\textbf{Proof of Theorem 2.3}: For every $A\in SO(2)$ we have
$$\hskip-3.8cm f\left(A^{-1}r\omega\right) = f\left(rA^{-1}\omega\right) = \sum_{m = 0}^{\infty}\sum_{k = - m}^{m}f_{m,k}(r)Y_{k}^{m}\left(A^{-1}\omega\right)$$
$$\hskip-6.1cm = \sum_{m = 0}^{\infty}\sum_{k = - m}^{m}f_{m,k}(r)\sum_{p = - m }^{m}\tau_{m, k}^{p}(A)Y_{p}^{m}\left(\omega\right).$$
Using the orthogonality relations for the family of functions $\tau_{m, k}^{p}, m\geq0, |k|, |p|\leq m$ on $SO(2)$ we have that
\vskip-0.2cm
$$\hskip-3.5cm\int_{SO(2)}f\left(A^{-1}r\omega\right)\overline{\tau_{m, k}^{p}(A)}dA = \frac{1}{2m + 1}\cdot f_{m,k}(r)Y_{p}^{m}(\omega).$$
Hence, we obtained that
$$\hskip-1.5cm\int_{SO(2)}G(A, \lambda)\overline{\tau_{m,k}^{p}(A)}dA = \frac{1}{2m + 1}\int_{C_{\lambda e_{3}}}f_{m,k}(|x|)Y_{p}^{m}\left(\frac{x}{|x|}\right)dS_{x}$$
where $\lambda\geq1$ and $|p|\leq m$. Using the parametrization (2.9) of the cone $C_{\lambda e_{3}}$ we have
\vskip-0.2cm
$$\hskip-6.5cm G_{m,k,p}(\lambda) = \int_{SO(2)}G(A, \lambda)\overline{\tau_{m,k}^{p}(A)}dA $$
$$\hskip0.2cm = \frac{\lambda}{2m + 1}\int_{-\infty}^{\infty}\int_{-\pi}^{\pi}f_{m,k}\left(\sqrt{\lambda^{2}\left(1 + t^{2}\right) + 2t\lambda\sqrt{\lambda^{2} - 1}}\right)$$ \begin{equation}\hskip1cm\times Y_{p}^{m}\left(\frac{t(\cos\phi, \sin\phi, 0) + \left(\lambda + t\sqrt{\lambda^{2} - 1}\right)e_{3}}{\sqrt{\lambda^{2}\left(1 + t^{2}\right) + 2t\lambda\sqrt{\lambda^{2} - 1}}}\right)d\phi dt, \lambda\geq1.\end{equation}
Let $\alpha$ be a point in the interval $[-\pi,\pi)$ such that
$$\cos\alpha = \frac{\lambda + t\sqrt{\lambda^{2} - 1}}{\sqrt{\lambda^{2}\left(1 + t^{2}\right) + 2t\lambda\sqrt{\lambda^{2} - 1}}}, \sin\alpha = \frac{t}{\sqrt{\lambda^{2}\left(1 + t^{2}\right) + 2t\lambda\sqrt{\lambda^{2} - 1}}}.$$
Every spherical harmonic in $\Bbb S^{2}$ satisfies the following identity (see for example \cite{1})
\begin{equation}
\hskip-2cm\int_{\Bbb S_{\psi}^{1}}Y_{k}^{m}((\cos\alpha)\psi + (\sin\alpha)\sigma)d\sigma = (2\pi / C_{m}^{\frac{1}{2}}(1))\cdot C_{m}^{\frac{1}{2}}(\cos\alpha)Y_{k}^{m}(\psi)
\end{equation}
where $\psi\in\Bbb S^{2}$. Hence, using equations (2.11)-(2.12) and the definition of the parameter $c_{m}$ we have
$$\hskip-7.5cm G_{m,k,p}(\lambda) = \int_{SO(2)}G(A, \lambda)\overline{\tau_{m,k}^{p}(A)}dA$$
$$\hskip-1.5cm = \frac{\lambda}{c_{m}}\cdot Y_{p}^{m}(e_{3})\int_{-\infty}^{\infty}f_{m,k}\left(\sqrt{\lambda^{2}\left(1 + t^{2}\right) + 2t\lambda\sqrt{\lambda^{2} - 1}}\right)$$ \begin{equation}\hskip3cm\times C_{m}^{\frac{1}{2}}\left(\frac{\lambda + t\sqrt{\lambda^{2} - 1}}{\sqrt{\lambda^{2}\left(1 + t^{2}\right) + 2t\lambda\sqrt{\lambda^{2} - 1}}}\right)dt, \lambda\geq1, |p|\leq m.\end{equation}
Now, observe that there exists an integer $p$ satisfying $|p|\leq m$ such that $Y_{p}^{m}(e_{3})\neq0$. Otherwise we can just choose a spherical harmonic $Y^{m}$ of degree $m$ and choose a point $\omega_{0}\in\Bbb S^{2}$ such that $Y^{m}(\omega_{0})\neq0$. Then, if $A$ is a rotation such that $A\omega_{0} = e_{3}$ then $K(\omega) = Y^{m}\left(A^{-1}\omega\right)$ will also be a spherical harmonic of degree $m$ such that $K(e_{3})\neq0$. Now, since $K$ can be expanded into spherical harmonics from the family $Y_{p}^{m}, |p|\leq m$ then if $Y_{p}^{m}(e_{3}) = 0$ for every $|p|\leq m$ then we will also have $K(e_{3}) = 0$ which is a contradiction.

Let us choose $p', |p'|\leq m$ such that $Y_{p'}^{m}(e_{3})\neq0$. Then, from equation (2.13) we have
\vskip-0.2cm
$$\hskip-10.5cm \frac{c_{m}}{\lambda}\cdot G_{m,k,p'}(\lambda)/ Y_{p'}^{m}(e_{3})$$
$$\hskip-0.5cm = \int_{-\infty}^{\infty}f_{m,k}\left(\sqrt{\lambda^{2}\left(1 + t^{2}\right) + 2t\lambda\sqrt{\lambda^{2} - 1}}\right) C_{m}^{\frac{1}{2}}\left(\frac{\lambda + t\sqrt{\lambda^{2} - 1}}{\sqrt{\lambda^{2}\left(1 + t^{2}\right) + 2t\lambda\sqrt{\lambda^{2} - 1}}}\right)dt$$
$$ = \int_{-\infty}^{-\frac{\sqrt{\lambda^{2} - 1}}{\lambda}}f_{m,k}\left(\sqrt{\lambda^{2}\left(1 + t^{2}\right) + 2t\lambda\sqrt{\lambda^{2} - 1}}\right) C_{m}^{\frac{1}{2}}\left(\frac{\lambda + t\sqrt{\lambda^{2} - 1}}{\sqrt{\lambda^{2}\left(1 + t^{2}\right) + 2t\lambda\sqrt{\lambda^{2} - 1}}}\right)dt$$
\begin{equation} + \int_{-\frac{\sqrt{\lambda^{2} - 1}}{\lambda}}^{\infty}f_{m,k}\left(\sqrt{\lambda^{2}\left(1 + t^{2}\right) + 2t\lambda\sqrt{\lambda^{2} - 1}}\right) C_{m}^{\frac{1}{2}}\left(\frac{\lambda + t\sqrt{\lambda^{2} - 1}}{\sqrt{\lambda^{2}\left(1 + t^{2}\right) + 2t\lambda\sqrt{\lambda^{2} - 1}}}\right)dt.\end{equation}
Observe that the function
\vskip-0.2cm
$$\hskip-6cm u(t) = \sqrt{\lambda^{2}\left(1 + t^{2}\right) + 2t\lambda\sqrt{\lambda^{2} - 1}}, t\in\Bbb R$$
is injective on the intervals $\left(-\infty, -\sqrt{\lambda^{2} - 1} / \lambda\right]$ and $\left[-\sqrt{\lambda^{2} - 1} / \lambda, \infty\right)$. Hence, we can make the following change of variables
\begin{equation}\hskip-1.5cm x = \sqrt{\lambda^{2}\left(1 + t^{2}\right) + 2t\lambda\sqrt{\lambda^{2} - 1}}, dx = \frac{\lambda\left(t\lambda + \sqrt{\lambda^{2} - 1}\right)dt}{\sqrt{\lambda^{2}\left(1 + t^{2}\right) + 2t\lambda\sqrt{\lambda^{2} - 1}}}.\end{equation}
Extracting the variable $t$ from equation (2.15) we have
$$\hskip-3.2cm t = -(\sqrt{\lambda^{2} - 1} + \sqrt{x^{2} - 1}) / \lambda, t\in\left(-\infty, -\sqrt{\lambda^{2} - 1} / \lambda\right],$$
$$\hskip-3.5cm t = -(\sqrt{\lambda^{2} - 1} - \sqrt{x^{2} - 1}) / \lambda, t\in\left[-\sqrt{\lambda^{2} - 1} / \lambda, \infty\right).$$
In both cases we have $dx = \lambda\sqrt{x^{2} - 1}dt / x$. Thus
\vskip-0.2cm
$$\hskip-8.5cm c_{m}\cdot G_{m,k,p'}(\lambda)/Y_{p'}^{m}(e_{3})$$
$$\hskip-3.5cm = \int_{1}^{\infty}f_{m,k}(x)C_{m}^{\frac{1}{2}}\left(\frac{1 - \sqrt{\lambda^{2} - 1}\sqrt{x^{2} - 1}}{\lambda x}\right)\frac{xdx}{\sqrt{x^{2} - 1}}.$$ \begin{equation}\hskip-2.5cm + \int_{1}^{\infty}f_{m,k}(x)C_{m}^{\frac{1}{2}}\left(\frac{1 + \sqrt{\lambda^{2} - 1}\sqrt{x^{2} - 1}}{\lambda x}\right)\frac{xdx}{\sqrt{x^{2} - 1}}, \lambda\geq1.\end{equation}
Denoting $\lambda' = 1 / \lambda$ and making the change of variables $x = 1 / x'$ we have
$$\hskip-8cm c_{m}\cdot G_{m,k,p'}\left(\frac{1}{\lambda'}\right) / Y_{p'}^{m}(e_{3})$$
$$\hskip-1cm = \int_{0}^{1}f_{m,k}\left(\frac{1}{x'}\right)C_{m}^{\frac{1}{2}}\left(\lambda'x' - \sqrt{1 - (\lambda')^{2}}\sqrt{1 - (x')^{2}}\right)\frac{dx'}{(x')^{2}\sqrt{1 - (x')^{2}}}$$ \begin{equation} + \int_{0}^{1}f_{m,k}\left(\frac{1}{x'}\right)C_{m}^{\frac{1}{2}}\left(\lambda'x' + \sqrt{1 - (\lambda')^{2}}\sqrt{1 - (x')^{2}}\right)\frac{dx'}{(x')^{2}\sqrt{1 - (x')^{2}}}\end{equation}
where $0\leq\lambda'\leq1$. Observe that both integrals in equation (2.17) converge since for every $m\geq0, |k|\leq m$ the function $f_{m,k}$ vanishes at infinity (since, by assumption, $f$ is compactly supported). Hence, there is no singularity near $x' = 0$.

Let us denote $\lambda' = \cos\alpha, 0\leq\alpha\leq\pi / 2$ and make the change of variables $x' = \cos\beta$. Then, from equation (2.17) we have\\
$$\hskip-6.5cm c_{m}\cdot G_{m,k,p'}\left(\frac{1}{\cos\alpha}\right) / Y_{p'}^{m}(e_{3})$$
$$\hskip-2.5cm = \int_{0}^{\frac{\pi}{2}}f_{m,k}\left(\frac{1}{\cos\beta}\right)C_{m}^{\frac{1}{2}}\left(\cos\alpha\cos\beta - \sin\alpha\sin\beta\right)\frac{d\beta}{\cos^{2}\beta}$$ $$\hskip-2cm + \int_{0}^{\frac{\pi}{2}}f_{m,k}\left(\frac{1}{\cos\beta}\right)C_{m}^{\frac{1}{2}}\left(\cos\alpha\cos\beta + \sin\alpha\sin\beta\right)\frac{d\beta}{\cos^{2}\beta}$$
$$\hskip-0.5cm = \int_{0}^{\frac{\pi}{2}}\widetilde{f}_{m,k}(\beta)C_{m}^{\frac{1}{2}}\left(\cos(\alpha + \beta)\right)d\beta +  \int_{0}^{\frac{\pi}{2}}\widetilde{f}_{m,k}(\beta)C_{m}^{\frac{1}{2}}\left(\cos(\alpha - \beta)\right)d\beta$$
\begin{equation}\hskip-2.5cm = \int_{0}^{\frac{\pi}{2}}\widetilde{f}_{m,k}(\beta)\left(C_{m}^{\frac{1}{2}}\left(\cos(\alpha + \beta)\right) + C_{m}^{\frac{1}{2}}\left(\cos(\alpha - \beta)\right)\right)d\beta\end{equation}
where we define
\vskip-0.2cm
$$\hskip-6.5cm\widetilde{f}_{m,k}(\beta) = \frac{1}{\cos^{2}\beta}f_{m,k}\left(\frac{1}{\cos\beta}\right).$$
Observe that $\widetilde{f}_{m,k}(\beta)$ is an even function defined on $-\pi / 2\leq \beta\leq \pi / 2$. Since the sum of the Gegenbauer polynomials in the integral (2.18) is an even function of $\beta$ it follows that
\vskip-0.2cm
$$\hskip-6.3cm c_{m}\cdot G_{m,k,p'}\left(\frac{1}{\cos\alpha}\right) / Y_{p'}^{m}(e_{3})$$
$$\hskip-3cm = \frac{1}{2}\int_{-\frac{\pi}{2}}^{\frac{\pi}{2}}\widetilde{f}_{m,k}(\beta)\left(C_{m}^{\frac{1}{2}}\left(\cos(\alpha + \beta)\right) + C_{m}^{\frac{1}{2}}\left(\cos(\alpha - \beta)\right)\right)d\beta$$
$$\hskip-0.2cm = \frac{1}{2}\left[\underset{\mathcal{I}_{1}}{\underbrace{\int_{-\frac{\pi}{2}}^{\frac{\pi}{2}}\widetilde{f}_{m,k}(\beta)C_{m}^{\frac{1}{2}}\left(\cos(\alpha + \beta)\right)d\beta}} + \underset{\mathcal{I}_{2}}{\underbrace{\int_{-\frac{\pi}{2}}^{\frac{\pi}{2}}\widetilde{f}_{m,k}(\beta)C_{m}^{\frac{1}{2}}\left(\cos(\alpha - \beta)\right)d\beta}}\right].$$
Since $\widetilde{f}_{m,k}(\beta)$ is an even function of $\beta$ it follows easily that $\mathcal{I}_{1} = \mathcal{I}_{2}$. Thus,
\begin{equation}\hskip-4.6cm \widetilde{G}_{m,k,p'}(\alpha) = \int_{-\frac{\pi}{2}}^{\frac{\pi}{2}}\widetilde{f}_{m,k}(\beta)C_{m}^{\frac{1}{2}}\left(\cos(\alpha - \beta)\right)d\beta\end{equation}
where we denote
\vskip-0.2cm
$$\hskip-4.5cm \widetilde{G}_{m,k,p'}(\alpha) = c_{m}\cdot G_{m,k,p'}\left(\frac{1}{\cos\alpha}\right) / Y_{p'}^{m}(e_{3}).$$
Observe that the right hand side of equation (2.19) is given for $0\leq\alpha\leq\pi / 2$, but using the change of variables $\beta\mapsto-\beta$ and using the evenness of $\widetilde{f}_{m,k}$ it follows that the right hand side of equation (2.19) is an even function of $\alpha$. Hence, $\widetilde{G}_{m,k,p}$ can be extended, as an even function of $\alpha$, to the interval $[-\pi / 2, \pi / 2]$. The right hand side of equation (2.19) is defined for all $\alpha\in[-\pi,\pi]$ and if we denote it by $\mathcal{K}_{m,k}$ then we have that
\vskip-0.2cm
$$\hskip-4.9cm\mathcal{K}_{m,k}(\alpha) = (- 1)^{m}\mathcal{K}_{m,k}(\pi - |\alpha|), \frac{\pi}{2}\leq|\alpha|\leq\pi.$$
Indeed, let $\alpha$ satisfy $\pi / 2\leq|\alpha|\leq\pi$, then we have
$$\hskip-4.2cm\mathcal{K}_{m,k}(\alpha) = \int_{-\frac{\pi}{2}}^{\frac{\pi}{2}}\widetilde{f}_{m,k}(\beta)C_{m}^{\frac{1}{2}}\left(\cos(\alpha - \beta)\right)d\beta$$
$$\hskip-4.5cm = \int_{-\frac{\pi}{2}}^{\frac{\pi}{2}}\widetilde{f}_{m,k}(\beta)C_{m}^{\frac{1}{2}}\left(-\cos(\pi \pm\alpha \mp \beta)\right)d\beta$$
$$\hskip-4cm = (- 1)^{m}\int_{-\frac{\pi}{2}}^{\frac{\pi}{2}}\widetilde{f}_{m,k}(\beta)C_{m}^{\frac{1}{2}}\left(\cos(\pi \pm\alpha \mp \beta)\right)d\beta.$$
Hence, in case where $\pi / 2\leq\alpha\leq\pi$ then we choose the sign
$$\hskip-6.5cm\pi \pm\alpha \mp \beta = \pi - \alpha + \beta = \pi - |\alpha| + \beta$$
and then by making the change of variables $\beta\mapsto -\beta$ and using the evenness of $\widetilde{f}_{m,k}$ we have
$$\mathcal{K}_{m,k}(\alpha) = (- 1)^{m}\int_{-\frac{\pi}{2}}^{\frac{\pi}{2}}\widetilde{f}_{m,k}(\beta)C_{m}^{\frac{1}{2}}\left(\cos(\pi - |\alpha| - \beta)\right)d\beta = (- 1)^{m}\mathcal{K}_{m,k}(\pi - |\alpha|).$$
In case where $-\pi\leq\alpha\leq-\pi / 2$ then we choose the sign
$$\hskip-6.5cm\pi \pm\alpha \mp \beta = \pi + \alpha - \beta = \pi - |\alpha| - \beta$$
and then we can prove in the exact same way that $\mathcal{K}_{m,k}(\alpha) = (- 1)^{m}\mathcal{K}_{m,k}(\pi - |\alpha|)$. Hence, if we extend the  function $\widetilde{G}_{m,k,p'}$ to the larger domain $[-\pi,\pi]$ by defining
$$\hskip-4.5cm\widetilde{G}_{m,k,p'}(\alpha) = (- 1)^{m}\widetilde{G}_{m,k,p'}(\pi - |\alpha|), \frac{\pi}{2}\leq|\alpha|\leq\pi$$
then equation (2.19) is valid for all $\alpha$ in $[-\pi,\pi]$. Let us define the function $g_{m,k}$ as follows
\vskip-0.2cm
\[\hskip-7cm g_{m,k}(\beta) =
\begin{cases}
\widetilde{f}_{m,k}(\beta), \hskip0.9cm|\beta| \leq \frac{\pi}{2},\\
0, \hskip1.2cm\frac{\pi}{2} < |\beta|\leq\pi.
\end{cases}
\]
Then, from equation (2.19) we have
\begin{equation}\hskip-3.3cm\widetilde{G}_{m,k,p'}(\alpha) = \int_{-\pi}^{\pi}g_{m,k}(\beta)C_{m}^{\frac{1}{2}}\left(\cos(\alpha - \beta)\right)d\beta, \alpha\in[-\pi,\pi].\end{equation}
Assume now that $\widetilde{G}_{m,k,p'}$ and $g_{m,k}$ are defined on the whole line $\Bbb R$ by taking their $2\pi$ periodic extensions. Bearing in mind that the convolution $h_{3}$ of two $2\pi$ periodic functions $h_{1}$ and $h_{2}$ is defined by
\vskip-0.2cm
$$\hskip-5.7cm h_{3}(\alpha) = \frac{1}{2\pi}\int_{-\pi}^{\pi}h_{1}(\beta)h_{2}(\alpha - \beta)d\beta, \alpha\in\Bbb R$$
and that the convolution theorem for this type of functions asserts that if
$$\hskip-1cm h_{1}(\alpha) = \sum_{n = -\infty}^{\infty}c_{1,n}e^{in\alpha}, h_{2}(\alpha) = \sum_{n = -\infty}^{\infty}c_{2,n}e^{in\alpha}, h_{3}(\alpha) = \sum_{n = -\infty}^{\infty}c_{3,n}e^{in\alpha}$$
then $c_{3,n} = c_{1,n}\cdot c_{2,n}$, Theorem 2.3 is obtained from equation (2.20) and by using the expansions of $\widetilde{G}_{m,k,p'}, g_{m,k}$ and $C_{m}^{\frac{1}{2}}(\cos(\cdot))$ into their corresponding Fourier series.

$\hskip14.4cm\square$

\subsection{The case of integration on lines in $\Bbb R^{3}$ with equal distances from two given points}

\hskip0.6cm Let $x$ and $y$ be two distinct points in $\Bbb R^{3}$ and consider the family $\Pi$ of all lines which have equal distances from these points. Without loss of generality we can assume that $x = e_{3}, y = -e_{3}$. In this case it can be easily observed that $\Pi$ is the set of all lines whose distance to the origin is obtained at their intersection point with the $XY$ plane, when considering lines which are not contained in this plane, and also all the lines which are contained in the $XY$ plane. In other words, for each point $p = (p_{1}, p_{2}, 0)\neq\overline{0}$ in the $XY$ plane we consider the unique hyperplane $\mathbf{H}_{p}$ which passes through $p$ and whose normal is in the direction of $\overrightarrow{op}$ and then we take all the lines which pass through $p$ and are contained in $\mathbf{H}_{p}$. Since the family of hyperplanes $\mathbf{H}_{p}$ where $p\in\left(\Bbb R^{2}\setminus\{(0,0)\}\right)\times\{0\}$ is two dimensional and the set of all lines which pass through $p$ and are contained in $\mathbf{H}_{p}$ is one dimensional it follows that $\dim\Pi = 3$ (in case where $p = \overline{0}$ then we can take all the lines which pass through $p$).

Our aim in this section is to recover a continuous function $f$, defined in $\Bbb R^{3}$, in case where we are given the integrals of $f$ on the set of all lines in $\Pi$. Since $\dim\Pi = 3$ our problem is well posed. In order to guarantee that inversion formulas are obtainable, when considering integration on lines in $\Pi$, we must restrict our family of functions since any function $f = f(x_{1}, x_{2}, x_{3})$ which is radial with respect to the variable $x^{\ast} = (x_{1}, x_{2})$ and is odd with respect to the variable $x_{3}$ produces no signals. However, if we assume that $f$ is compactly supported in the half space $x_{3} > 0$ then we can obtain an inversion formula as will be proved in Theorem 2.5 below.

Before formulating Theorem 2.5 we will need first to parameterize the family of lines $\Pi$ and introduce the Mellin transform. Observe that since $\Pi$ is invariant with respect to rotations which leave the vector $e_{3}$ fixed it follows that it is enough to parameterize the subset of lines in $\Pi$ which are parallel to the $YZ$ plane (and then we take rotations of these lines with respect to the $Z$ axis to obtain the whole set $\Pi$). If a line $l$ in this subset has a distance $\lambda\geq0$ from the origin and its projection to the $YZ$ plane forms an angle $\theta, -\pi / 2\leq\theta < \pi / 2$ with the $Y$ axis then $l$ has the following parametrization
\vskip-0.2cm
$$\hskip-4cm l_{\lambda,\theta} = \{(\lambda, t\cos\theta, t\sin\theta), t\in\Bbb R\}, \lambda\geq0, -\pi / 2\leq\theta < \pi / 2.$$
Hence, we obtain the following parametrization
\begin{equation}\Pi = \left\{l_{\lambda, \theta, \phi} =
\left\{\left(\begin{array}{c}
\lambda\cos\phi - t\cos\theta\sin\phi\\
\lambda\sin\phi + t\cos\theta\cos\phi\\
t\sin\theta
\end{array}\right), t\in\Bbb R\right\}:\lambda\geq0, -\frac{\pi}{2}\leq\theta < \frac{\pi}{2}, -\pi\leq\phi < \pi\right\}\end{equation}
for the family $\Pi$. If the function $G = G(\lambda, \theta ,\phi)$ is defined by
\begin{equation}\hskip-2.5cm G(\lambda, \theta, \phi) = \int_{_{l_{\lambda, \theta, \phi}}}fdl, (\lambda, \theta, \phi)\in\Bbb R^{+}\times\left[-\pi / 2, \pi / 2\right)\times[-\pi,\pi),\end{equation}
where $\Bbb R^{+}$ denotes the ray $[0,\infty)$ and $dl$ is the infinitesimal length measure on the line $l$, then our aim is to express the function $f$ via $G$.

For a function $F$, defined in $\Bbb R^{+}$, define the Mellin transform $\mathcal{M}F$ of $F$ by
\vskip-0.2cm
\begin{equation}\hskip-7cm(\mathcal{M}F)(s) = \int_{0}^{\infty}y^{s - 1}F(y)dy, \Re s > 0\end{equation}
where it should be noted that the above integral might not converge for every complex number $s$ satisfying $\Re s > 0$. For the Mellin transform we have the following inversion formula (see \cite{3}, Chap. 8.2):
\vskip-0.2cm
\begin{equation}\hskip-4cm F(r) = \mathcal{M}^{-1}(\mathcal{M}F)(r) = \frac{1}{2\pi i}\int_{\varrho - i\infty}^{\varrho + i\infty}r^{-s}(\mathcal{M}F)(s)ds,\end{equation}
where $\varrho > 0$. The inversion formula (2.24) is valid for many types of functions whereas in our case we will justify its application in the particular case where $F$ is continuous and has compact support (see Lemma 2.7). We will also need the following scaling property
\vskip-0.2cm
\begin{equation}\hskip-2.1cm \textrm{if } F_{a}(x) = F(ax) \textrm{ where } a > 0 \textrm{ then } (\mathcal{M}F_{a})(s) = a^{-s}(\mathcal{M}F)(s)\end{equation}
(see \cite{3}, Chap. 8.3). Formula (2.25) is valid in any domain for which the Mellin transforms of $F$ and $F_{a}$ exist and hence we will have to justify the existence of these transforms, in the corresponding domain, each time this scaling property is used. If $F$ is a function of two variables then we denote by $\mathcal{M}_{i}F, i = 1, 2$ the Melling transform of $F$ with respect to its $i^{th}$ variable. Finally, for every integer $n$ and complex numbers
$\zeta, \xi$ define the following integral
\vskip-0.2cm
\begin{equation}\hskip-6.2cm\mathcal{I}_{n}(\zeta, \xi) = \int_{0}^{\infty}(1 + t^{2})^{-\frac{\zeta}{2}}t^{-\xi}(1 + it)^{n}dt\end{equation}
where $\mathcal{I}_{n}$ might not converge for every $\zeta$ and $\xi$. Now we can formulate Theorem 2.5.

\begin{thm}

Let $f$ be a continuous function, defined in $\Bbb R^{3}$, which is compactly supported in the half space $x_{3} > 0$ and let $G$ be defined as in (2.22). Let
\begin{equation}f(r\cos\varphi, r\sin\varphi, \tau) = \sum_{n = -\infty}^{\infty}f_{n}(r, \tau)e^{in\varphi}, G(\lambda, \theta, \phi) = \sum_{n = -\infty}^{\infty}G_{n}(\lambda, \theta)e^{in\phi}\end{equation}
be the Fourier expansions of $f$ and $G$ respectively in the variables $\varphi$ and $\phi$. Define
$$\hskip-5.65cm G_{n}^{\ast}(\lambda, s) = G_{n}(\lambda , \arctan s)/\sqrt{1 + s^{2}}, \lambda, s\geq0,$$
$$\hskip-4.05cm G_{n}^{\ast\ast}(\lambda, \xi) = \lambda^{\xi - n - 1}\mathcal{M}_{2}\left[G_{n}^{\ast}\right](\lambda, \xi), \lambda\geq0, 0 < \Re(\xi) < 1,$$
$$\hskip-3.15cm U_{n}(\zeta, \xi) = \mathcal{M}_{1}\left[G_{n}^{\ast\ast}\right](\zeta, \xi) / \mathcal{I}_{n}(\zeta, \xi), \Re(\zeta) > n', 0 < \Re(\xi) < 1$$
where $n' = \max(0, n + 1)$, then
\vskip-0.2cm
\begin{equation}\hskip-4.9cm f_{n}(r, \tau) = r^{n}\mathcal{M}_{2}^{-1}\left[\mathcal{M}_{1}^{-1}\left[U_{n}(\cdot, \xi)\right]\right](r, \tau), r, \tau\geq0.\end{equation}
\end{thm}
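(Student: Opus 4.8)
The plan is to exploit the rotational symmetry of $\Pi$ about the $Z$–axis to decouple the Fourier modes, thereby reducing the inversion to a one–dimensional problem for each coefficient $f_{n}$, and then to resolve that one–dimensional problem by applying the Mellin transform twice, once in each of the two radial-type variables. The compact support of $f$ in $\{x_{3}>0\}$ enters in an essential way (without it the modes that are radial in $(x_{1},x_{2})$ and odd in $x_{3}$ would be invisible).

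First I would carry out the Fourier reduction. Every line $l_{\lambda,\theta,\phi}$ in (2.21) is the image of $l_{\lambda,\theta,0}=\{(\lambda,t\cos\theta,t\sin\theta):t\in\Bbb R\}$ under the rotation $R_{\phi}$ about the $Z$–axis, and since $R_{\phi}$ is an isometry and the parametrization of $l_{\lambda,\theta,0}$ has unit speed, $dl=dt$. Writing a point of $R_{\phi}l_{\lambda,\theta,0}$ in cylindrical coordinates $(r,\varphi,\tau)$ one obtains $r=\sqrt{\lambda^{2}+t^{2}\cos^{2}\theta}$, $\tau=t\sin\theta$ and $e^{i\varphi}=e^{i\phi}(\lambda+it\cos\theta)/\sqrt{\lambda^{2}+t^{2}\cos^{2}\theta}$. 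Inserting the expansion (2.27) of $f$ into (2.22) and integrating term by term, the exponentials $e^{in\phi}$ separate and give
$$G_{n}(\lambda,\theta)=\int_{\Bbb R}f_{n}\!\left(\sqrt{\lambda^{2}+t^{2}\cos^{2}\theta},\,t\sin\theta\right)\left(\frac{\lambda+it\cos\theta}{\sqrt{\lambda^{2}+t^{2}\cos^{2}\theta}}\right)^{\!n}dt .$$
Since $f$ is supported in $\{x_{3}>0\}$ we have $f_{n}(r,\tau)=0$ for $\tau\le0$, so with $\theta=\arctan s$, $s\ge0$, only $t>0$ contributes; the substitution $u=t\cos\theta=t/\sqrt{1+s^{2}}$ then yields
$$G_{n}^{\ast}(\lambda,s)=\int_{0}^{\infty}f_{n}\!\left(\sqrt{\lambda^{2}+u^{2}},\,us\right)\left(\frac{\lambda+iu}{\sqrt{\lambda^{2}+u^{2}}}\right)^{\!n}du .$$

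Next I would feed this through the two Mellin transforms. Applying $\mathcal{M}_{2}$ in the variable $s$ and substituting $w=us$ in the inner integral gives
$$\mathcal{M}_{2}\!\left[G_{n}^{\ast}\right](\lambda,\xi)=\int_{0}^{\infty}\left(\frac{\lambda+iu}{\sqrt{\lambda^{2}+u^{2}}}\right)^{\!n}u^{-\xi}\,\mathcal{M}_{2}\!\left[f_{n}\right]\!\left(\sqrt{\lambda^{2}+u^{2}},\xi\right)du,$$
and then the substitution $u=\lambda v$ extracts a factor $\lambda^{1-\xi}$, so that $G_{n}^{\ast\ast}(\lambda,\xi)=\lambda^{\xi-n-1}\mathcal{M}_{2}[G_{n}^{\ast}](\lambda,\xi)$ equals $\lambda^{-n}\int_{0}^{\infty}\bigl((1+iv)/\sqrt{1+v^{2}}\bigr)^{n}v^{-\xi}\mathcal{M}_{2}[f_{n}](\lambda\sqrt{1+v^{2}},\xi)\,dv$. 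Applying $\mathcal{M}_{1}$ in $\lambda$ and substituting $\mu=\lambda\sqrt{1+v^{2}}$ separates the $\mu$– and $v$–integrals; the $v$–integral collapses to
$$\int_{0}^{\infty}(1+v^{2})^{-\zeta/2}v^{-\xi}(1+iv)^{n}\,dv=\mathcal{I}_{n}(\zeta,\xi)$$
and the $\mu$–integral is $\mathcal{M}_{1}[\mathcal{M}_{2}[f_{n}]](\zeta-n,\xi)$, so that $\mathcal{M}_{1}[G_{n}^{\ast\ast}](\zeta,\xi)=\mathcal{I}_{n}(\zeta,\xi)\,\mathcal{M}_{1}[\mathcal{M}_{2}[f_{n}]](\zeta-n,\xi)$ and hence $U_{n}(\zeta,\xi)=\mathcal{M}_{1}[\mathcal{M}_{2}[f_{n}]](\zeta-n,\xi)$. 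Writing this as $U_{n}(\zeta,\xi)=\mathcal{M}_{1}[\,r^{-n}\mathcal{M}_{2}[f_{n}](r,\xi)\,](\zeta)$ and inverting $\mathcal{M}_{1}$ — the shift $\zeta\mapsto\zeta-n$ is exactly what supplies the prefactor $r^{n}$ — followed by $\mathcal{M}_{2}^{-1}$ in $\xi\mapsto\tau$ gives formula (2.29); finally, summing the Fourier series in $\varphi$ reconstructs $f$.

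The hard part will be the analytic bookkeeping that makes all of this rigorous. One has to verify that $\mathcal{M}_{2}[f_{n}]$ and the iterated transform exist and are analytic in the strips $0<\Re\xi<1$ and $\Re\zeta>n'=\max(0,n+1)$ — here one uses continuity of $f$ and the fact that $\mathrm{supp}\,f$ is a compact subset of $\{x_{3}>0\}$ to control the behaviour of $G_{n}^{\ast}$, $G_{n}^{\ast\ast}$ and $f_{n}$ near the endpoints $0$ and $\infty$ of the radial variables — that $\mathcal{I}_{n}(\zeta,\xi)$ converges and is nonzero throughout that region, so that the division defining $U_{n}$ is legitimate, and that the repeated interchanges of the order of integration (and the term-by-term integration of the Fourier series of $f$) are justified there. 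Once these points are in place, the two Mellin inversions are supplied by the inversion formula (2.24), whose applicability to the relevant continuous, compactly supported functions is exactly the content of Lemma 2.7.
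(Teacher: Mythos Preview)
Your proposal is correct and follows essentially the same route as the paper: Fourier decoupling in the angular variable about the $Z$--axis, restriction to $t>0$ via the support hypothesis, the substitution $u=t\cos\theta$ and $s=\tan\theta$, then two successive Mellin transforms with the scaling property to peel off the factor $\mathcal{I}_{n}(\zeta,\xi)$. The only cosmetic difference is that the paper introduces $g_{n}(r,\tau)=r^{-n}f_{n}(r,\tau)$ at the outset, whereas you carry $f_{n}$ throughout and account for the resulting shift $\zeta\mapsto\zeta-n$ at the end; also, the formula you reach is the one labelled (2.28), not (2.29).
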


\vskip0.3cm

\begin{remark} When using the Mellin inversion formula (2.24) in equation (2.28) then when using $\mathcal{M}_{1}^{-1}$, for which integration is taken with respect to the variable $\zeta$, the point $\varrho$ must be taken from the ray $(n', \infty)$ whereas when using $\mathcal{M}_{2}^{-1}$, for which integration is taken with respect to the variable $\xi$, the point $\varrho$ must be taken from the interval $(0, 1)$. This is to ensure that integration is taken over the domain of definition of $U_{n} = U_{n}(\zeta, \xi)$ for which the integral $\mathcal{I}_{n}$ converges.\end{remark}

\textbf{Proof of Theorem 2.5:} In the domain of definition of the function $G$ let us assume for now that the variable $\theta$ is restricted to the interval $[0,\pi / 2]$. Then, integrating $f$ on the line $l_{\lambda, \theta, \phi}$ with its parametrization given in equation (2.21) and using the expansion (2.27) of $f$ we obtain
$$G(\lambda, \theta, \phi) = \int_{_{l_{\lambda, \theta, \phi}}}fdl = \sum_{n = -\infty}^{\infty}\int_{-\infty}^{\infty}f_{n}\left(\sqrt{\lambda^{2} + t^{2}\cos^{2}\theta}, t\sin\theta \right)e^{in\varphi(\lambda, t, \theta, \phi)}dt$$
\begin{equation}\hskip-0.55cm = \sum_{n = -\infty}^{\infty}\int_{0}^{\infty}f_{n}\left(\sqrt{\lambda^{2} + t^{2}\cos^{2}\theta}, t\sin\theta \right)e^{in\varphi(\lambda, t, \theta, \phi)}dt\end{equation}
where
$$\cos\varphi(\lambda, t, \theta, \phi) = \frac{\lambda\cos\phi - t\cos\theta\sin\phi}{\sqrt{\lambda^{2} + t^{2}\cos^{2}\theta}}, \sin\varphi(\lambda, t, \theta, \phi) = \frac{\lambda\sin\phi + t\cos\theta\cos\phi}{\sqrt{\lambda^{2} + t^{2}\cos^{2}\theta}}$$
and where in the last passage of equation (2.29) we used the fact that $f$ is supported on the half-space $x_{3} > 0$ and that $\sin\theta\geq0$ if $0\leq\theta\leq\pi / 2$. The following relation
$$\hskip-9.2cm G(\lambda, -\theta, \phi) = \overline{G(\lambda, \theta, -\phi)}$$
can be easily checked and thus, assuming that $\lambda\geq0$ and $-\pi\leq \phi < \pi$, negative values of $\theta$ do not give any new information on $f$ and hence we will assume from now on that $\theta$ is given only in the interval $[0,\pi / 2]$. Now, define the variable $\alpha$ so that
$$\hskip-2.75cm\cos\alpha(\lambda, t, \theta) = \frac{\lambda}{\sqrt{\lambda^{2} + t^{2}\cos^{2}\theta}}, \sin\alpha(\lambda, t, \theta) = \frac{t\cos\theta}{\sqrt{\lambda^{2} + t^{2}\cos^{2}\theta}}$$
then we have
\vskip-0.2cm
$$\hskip-5.6cm\cos\varphi = \cos\alpha\cos\phi - \sin\alpha\sin\phi = \cos(\phi + \alpha),$$ $$\hskip-5.6cm\sin\varphi = \cos\alpha\sin\phi + \sin\alpha\cos\phi = \sin(\phi + \alpha).$$
Hence,
\vskip-0.2cm
$$\hskip-9cm e^{in\varphi(\lambda, t, \theta, \phi)} = e^{in\alpha(\lambda, t, \theta)}e^{in\phi}$$
and thus we have
$$\hskip-2cm G(\lambda, \theta, \phi) = \sum_{n = -\infty}^{\infty}e^{in\phi}\int_{0}^{\infty}f_{n}\left(\sqrt{\lambda^{2} + t^{2}\cos^{2}\theta}, t\sin\theta \right)e^{in\alpha(\lambda, t, \theta)}dt$$
from which we obtain that
\vskip-0.2cm
$$\hskip-7cm G_{n}(\lambda, \theta) = \frac{1}{2\pi}\int_{-\pi}^{\pi}G(\lambda, \theta, \phi)e^{-in\phi}d\phi$$ $$\hskip-5.4cm = \int_{-\infty}^{\infty}f_{n}\left(\sqrt{\lambda^{2} + t^{2}\cos^{2}\theta}, t\sin\theta \right)e^{in\alpha(\lambda, t, \theta)}dt$$
\begin{equation}\hskip-4.55cm = \int_{0}^{\infty}g_{n}\left(\sqrt{\lambda^{2} + t^{2}\cos^{2}\theta}, t\sin\theta \right)(\lambda + it\cos\theta)^{n}dt\end{equation}
where we denote
\vskip-0.2cm
$$\hskip-9.5cm g_{n}(r, \tau) = r^{-n}f_{n}(r, \tau).$$
Making the change of variables $t' = t\cos\theta$ in the right hand side of equation (2.30) we obtain that
\vskip-0.2cm
$$\hskip-3.7cm G_{n}(\lambda, \theta) = \int_{0}^{\infty}g_{n}\left(\sqrt{\lambda^{2} + (t')^{2}}, t'\tan\theta \right)(\lambda + it')^{n}\frac{dt'}{\cos\theta}.$$
If we denote $s = \tan\theta$ then since $0\leq \theta \leq \pi / 2$ it follows that $s\geq0$ and, since $\cos\theta = 1 / \sqrt{1 + \tan^{2}\theta}$ in this domain of $\theta$, we can write
\vskip-0.2cm
$$\hskip-9.35cm G_{n}(\lambda, \arctan s) / \sqrt{1 + s^{2}}$$
$$\hskip-4.5cm = \int_{0}^{\infty}g_{n}\left(\sqrt{\lambda^{2} + (t')^{2}}, st' \right)(\lambda + it')^{n}dt', s, \lambda\geq0.$$
Hence, if we define $G_{n}^{\ast}$ by the following relation $G_{n}^{\ast}(\lambda, s) = G_{n}(\lambda, \arctan s) / \sqrt{1 + s^{2}}$ and then take the Mellin transform with respect to the variable $s$ on both sides of the last equation we obtain that
\begin{equation}\hskip-1.7cm\mathcal{M}_{2}\left[G_{n}^{\ast}\right](\lambda, \xi) = \int_{0}^{\infty}\mathcal{M}_{2}\left[g_{n}\right]\left(\sqrt{\lambda^{2} + (t')^{2}}, \xi\right)(t')^{-\xi}(\lambda + it')^{n}dt'\end{equation}
where $\lambda\geq0, 0 < \Re(\xi) < 1$. The application of the Mellin scaling property (2.25) is justified here since for every $\lambda, t'\geq0$ the function
\vskip-0.2cm
$$\hskip-8.7cm s\mapsto g_{n}\left(\sqrt{\lambda^{2} + (t')^{2}}, st'\right)$$
is compactly supported in $s$ (since $f$ is compactly supported in $\Bbb R^{3}$ and thus obviously $g_{n}$ is compactly supported with respect to both variables $r$ and $\tau$) and thus, using Lemma 2.7 (see the end of this section), it follows that its Mellin transform exists for every $\Re\xi > 0$. The restriction that $0 < \Re(\xi) < 1$ is to ensure that the integral $\mathcal{I}_{n}$, as defined by equation (2.26), which will appear later in the computation, converges.

Making the change of variables $t' = \lambda t$ in the right hand side of equation (2.31) yields
\begin{equation}\hskip-1.7cm\mathcal{M}_{2}\left[G_{n}^{\ast}\right](\lambda, \xi) = \lambda^{n + 1}\lambda^{-\xi}\int_{0}^{\infty}\mathcal{M}_{2}\left[g_{n}\right]\left(\lambda\sqrt{1 + t^{2}}, \xi\right)t^{-\xi}(1 + it)^{n}dt.\end{equation}
Denoting
\vskip-0.2cm
$$\hskip-8cm G_{n}^{\ast\ast}(\lambda, \xi) =  \lambda^{\xi - n - 1}\mathcal{M}_{2}\left[G_{n}^{\ast}\right](\lambda, \xi)$$
and then taking the Mellin transform, with respect to the variable $\lambda$, on the integral in the right hand side of equation (2.32) and on $G_{n}^{\ast\ast}$ yields
$$\hskip-2cm\mathcal{M}_{1}\left[G_{n}^{\ast\ast}\right](\zeta, \xi) = \int_{0}^{\infty}\mathcal{M}_{1}\left[\mathcal{M}_{2}\left[g_{n}\right]\right]\left(\zeta, \xi\right)(1 + t^{2})^{-\frac{\zeta}{2}}t^{-\xi}(1 + it)^{n}dt$$
$$\hskip-1.8cm = \mathcal{M}_{1}\left[\mathcal{M}_{2}\left[g_{n}\right]\right]\left(\zeta, \xi\right)\underset{\mathcal{I}}{\underbrace{\int_{0}^{\infty}(1 + t^{2})^{-\frac{\zeta}{2}}t^{-\xi}(1 + it)^{n}dt}}$$
where the application of the Mellin scaling property, for the variable $\lambda$, can be justified in the exact same way as we previously did for the variable $s$. Observe that since $\Re(\zeta) > n'$ and  $0 < \Re(\xi) < 1$ then the integral $\mathcal{I} = \mathcal{I}_{n}(\zeta, \xi)$ converges. Indeed, at $t\rightarrow0^{+}$ the integrand behaves like $t^{-\xi}$ and for $t\rightarrow\infty$ if $n\leq -1$ then $\mathcal{I}_{n}$ obviously converges while for $n\geq0$ the integrand behaves like $t^{-\zeta - \xi + n}\leq t^{-\xi - 1}$ and $\mathcal{I}_{n}$ converges also for this case. Dividing both sides of the last equation by $\mathcal{I}_{n}(\zeta, \xi)$ and then taking the inverse Mellin transform in both variables $\zeta$ and $\xi$ we can obtain Theorem 2.5.

The application of the Mellin inversion formula (2.24) for $g_{n}$ and for $\mathcal{M}_{2}\left[g_{n}\right]$ can be justified since these functions have compact support in the corresponding variables on which the Mellin transforms are applied ($g_{n}$ is finitely supported in its second variable and $\mathcal{M}_{2}\left[g_{n}\right]$ is finitely supported in its first variable). Hence, the justification of these inversion formulas are now an easy consequence of Lemma 2.7.

$\hskip14.4cm\square$

\begin{lem}

Let $F:\Bbb R^{+}\rightarrow\Bbb R$ be a continuous function with compact support. Then, the Mellin transform of $F$ exists on each line $l_{c}:c + it, t\in\Bbb R$ where $c > 0$ and one can use the Mellin inversion formula (2.24) for $\varrho = c$ in order to reconstruct $F$.

\end{lem}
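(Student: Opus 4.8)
The plan is to establish the two assertions separately: first the existence of the Mellin transform on every vertical line $\Re s = c$ with $c > 0$, and then the validity of the inversion formula (2.24) with $\varrho = c$. For the first part, suppose $\mathrm{supp}(F)\subset[0,R]$. Then for $s = c + it$ we can estimate
$$
|(\mathcal{M}F)(s)| = \left|\int_{0}^{R} y^{s-1}F(y)\,dy\right| \leq \|F\|_{\infty}\int_{0}^{R} y^{c-1}\,dy = \|F\|_{\infty}\cdot\frac{R^{c}}{c},
$$
so the defining integral converges absolutely and $\mathcal{M}F$ is in fact bounded on the whole line $l_c$. (One could also note it is holomorphic in the half-plane $\Re s > 0$, but boundedness on $l_c$ is what we need.)

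For the inversion formula, the natural route is to reduce it to the classical Mellin inversion theorem via the standard substitution that turns the Mellin transform into the Fourier transform. Set $y = e^{-u}$ and define $\Phi(u) = F(e^{-u})e^{-cu}$ for $u\in\Bbb R$. Since $F$ is continuous with compact support in $[0,R]$, the function $\Phi$ is continuous, and it is supported in $u\geq -\log R$; moreover $|\Phi(u)|\leq \|F\|_{\infty}e^{-cu}$, so $\Phi\in L^1(\Bbb R)$ (the decay as $u\to+\infty$ is exponential because $c>0$). A direct change of variable shows that $(\mathcal{M}F)(c+it) = \int_{\Bbb R}\Phi(u)e^{itu}\,du = \widehat{\Phi}(-t)$, i.e. the Mellin transform on $l_c$ is exactly the Fourier transform of $\Phi$. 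Then the Mellin inversion formula (2.24) with $\varrho = c$ is, after the same substitution, nothing but the Fourier inversion formula $\Phi(u) = \frac{1}{2\pi}\int_{\Bbb R}\widehat{\Phi}(-t)e^{-itu}\,dt$ applied at the point $u = -\log r$, together with the identity $F(r) = \Phi(-\log r)e^{cu}|_{u=-\log r} = r^{-c}\Phi(-\log r)$.

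The one genuine subtlety — and the step I expect to require the most care — is the pointwise validity of Fourier inversion for $\Phi$, since $\Phi$ need not be smooth and $\widehat{\Phi}$ need not be integrable, so the inversion integral in (2.24) must be read as an improper (symmetric) integral. To handle this cleanly I would invoke a pointwise Fourier inversion criterion: $\Phi$ is $L^1$, and at every point it is continuous and of bounded variation on a neighborhood (continuity plus compact support is not quite enough for bounded variation, so here one should either assume, as the applications in the paper implicitly do, that $F$ is piecewise $C^1$, or appeal to the Dini/Jordan test using the regularity actually available); under such a hypothesis the symmetric partial integrals $\frac{1}{2\pi}\int_{-T}^{T}\widehat{\Phi}(-t)e^{-itu}\,dt$ converge to $\Phi(u)$ as $T\to\infty$. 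Translating back through $y = e^{-u}$ yields precisely (2.24) with $\varrho = c$, and since $c>0$ was arbitrary this completes the proof. I would remark that in every place Lemma 2.7 is used in the proofs of Theorems 2.3 and 2.5 the relevant functions ($g_n$, $\mathcal{M}_2[g_n]$, and the functions $f_{m,k}$) inherit enough smoothness from $f\in C^\infty$ or from the explicit formulas to meet the Dini/Jordan condition, so the application is legitimate there.
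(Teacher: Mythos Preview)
Your approach is essentially identical to the paper's: both make the substitution $y=e^{-u}$ to turn the Mellin transform on $l_c$ into the Fourier transform of $\Phi(u)=e^{-cu}F(e^{-u})$, observe that $\Phi\in L^{1}(\Bbb R)$ because $c>0$, and then read Mellin inversion as Fourier inversion. The only difference is that you are more scrupulous than the paper itself about the pointwise Fourier inversion step: the paper simply asserts that continuity and $L^{1}$ membership of $\Phi$ make inversion ``well known,'' whereas you correctly flag that $\widehat{\Phi}$ need not be integrable and that a Dini/Jordan-type condition (available in all the paper's applications, since $f\in C^{\infty}$) is what actually closes the argument.
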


\begin{proof}

Let us evaluate the Mellin transform of $F$ on the line $l_{c}$. Assume that $F$ is supported inside the interval $[0,a]$ where $a > 0$, then
$$\hskip-4cm(\mathcal{M}F)(c + it) = \int_{0}^{a}y^{c + it - 1}F(y)dy = [y = e^{- x}, dy = e^{- x}dx]$$
$$\hskip1.5cm = \int_{-\ln a}^{\infty}e^{-(it + c)x}e^{x}F\left(e^{-x}\right)e^{-x}dx = \int_{-\ln a}^{\infty}e^{-itx}e^{-cx}F\left(e^{-x}\right)dx$$
$$\hskip-5.7cm = \int_{-\infty}^{\infty}e^{-itx}F^{\ast}(x)dx$$
where
\vskip-0.2cm
\[\hskip-3.5cm F^{\ast}(x) =
\begin{cases}
e^{-cx}F(e^{-x}), x\geq - \ln a,\\
0, \hskip1.75cm x < - \ln a.
\end{cases}\]
Since $c > 0$ it follows that $F^{\ast}$ is in $L_{1}(\Bbb R)$ and thus its Fourier transform exists in $\Bbb R$ which is equivalent to the fact that the Mellin transform of $F$ exists on the line $l_{c}$. Inverting $F$ from its Mellin transform (2.23) is equivalent to inverting $F^{\ast}$ from its Fourier transform and since $F$ is continuous it follows that $F^{\ast}$ is also continuous and since it is also in $L_{1}(\Bbb R)$ then it is well known that one can use the Fourier inversion formula in order to recover $F^{\ast}$. This finishes the proof of Lemma 2.7.

\end{proof}

\end{document}